\newtheorem{thm}{Theorem}
\newtheorem{lem}[thm]{Lemma}
\newtheorem{cor}[thm]{Corollary}
\newtheorem{conj}[thm]{Conjecture}
\newtheorem{obs}[thm]{Observation}
\newtheorem*{claim}{Claim}
\title{Orientation-based edge-colorings and linear arboricity of multigraphs}
\author{Ronen Wdowinski\thanks{Department of Combinatorics and Optimization, University of Waterloo, Waterloo, ON, Canada. Email: ronen.wdowinski@uwaterloo.ca.}}
\date{\today}
\begin{document}
\maketitle
\begin{abstract}
    The Goldberg-Seymour Conjecture for $f$-colorings states that the $f$-chromatic index of a loopless multigraph is essentially determined by either a maximum degree or a maximum density parameter. We introduce an oriented version of $f$-colorings, where now each color class of the edge-coloring is required to be orientable in such a way that every vertex $v$ has indegree and outdegree at most some specified values $g(v)$ and $h(v)$. We prove that the associated $(g,h)$-oriented chromatic index satisfies a Goldberg-Seymour formula. We then present simple applications of this result to variations of $f$-colorings. In particular, we show that the Linear Arboricity Conjecture holds for $k$-degenerate loopless multigraphs when the maximum degree is at least $4k-2$, improving a bound recently announced by Chen, Hao, and Yu for simple graphs. Finally, we demonstrate that the $(g,h)$-oriented chromatic index is always equal to its list coloring analogue.
\end{abstract}

\section{Introduction}
For basic terminology on multigraphs, simple graphs, and oriented graphs, see Diestel \cite{Di}.

\subsection{f-colorings}

Given a loopless multigraph $G$ and a function $f : V(G) \rightarrow \mathbb{N} \backslash \{0\}$, an \textit{$f$-coloring} of $G$ is an assignment of a color to each edge of $G$ such that each color class is a subgraph of $G$ in which every vertex $v \in V(G)$ has degree $d(v)$ at most $f(v)$. We will refer to such subgraphs as \textit{degree-f subgraphs}. If every vertex $v$ has degree exactly $f(v)$, such subgraphs are more commonly known as \textit{$f$-factors}. The \textit{$f$-chromatic index} $\chi_f'(G)$ is the minimum number of colors needed in an $f$-coloring of $G$. These definitions can be extended to multigraphs with loops as long as $f(v) \ge 2$ for every vertex $v$ with a loop. For notation, if $f(v) = c$ for all $v \in V(G)$ where $c$ is a constant, we will write $f = c$. For a vertex subset $S \subseteq V(G)$, we will write $f(S) = \sum_{v \in S} f(v)$.

The notion of an $f$-coloring was introduced by Hakimi and Kariv \cite{HaKa} as a way to generalize the case $f=1$ of a \textit{proper edge-coloring}, where $\chi_1'(G) = \chi'(G)$ is known as the \textit{chromatic index} of $G$. In general it is NP-hard to determine $\chi'(G)$, even if $G$ is a simple graph \cite{Ho}, so we cannot expect there to be an efficient method to determine the $f$-chromatic index $\chi_f'(G)$ in general. However, the Goldberg-Seymour Conjecture for $f$-colorings asserts that we should always be able to determine $\chi_f'(G)$ to within an additive error of $1$.

First, one easy lower bound for the $f$-chromatic index is $\chi_f'(G) \ge \Delta_f(G)$ where 
\begin{align*}
    \Delta_f(G) = \max_{v \in V(G)} \left\lceil \frac{d(v)}{f(v)} \right\rceil
\end{align*}
is a weighted maximum degree; this is because every vertex $v$ has degree at most $f(v)$ in every degree-$f$ subgraph of $G$. Another easy lower bound is $\chi_f'(G) \ge \mathcal{W}_f(G)$ where
\begin{align*}
    \mathcal{W}_f(G) = \max_{S \subseteq V(G), |S| \ge 2} \left\lceil \frac{e(S)}{\lfloor f(S)/2 \rfloor} \right\rceil
\end{align*}
is a weighted maximum density (with $e(S)$ denoting the number of edges in the induced subgraph $G[S]$); this is because every degree-$f$ subgraph on a vertex subset $S \subseteq V(G)$ has at most $\left\lfloor \frac{1}{2} \sum_{v \in S} f(v) \right\rfloor = \lfloor f(S)/2 \rfloor$ edges. Inspired by the above famous conjecture due to Gupta \cite{Gu4}, Goldberg \cite{Go}, Andersen \cite{An}, and Seymour \cite{Se2} for the case $f=1$, Nakano, Nishizeki, and Saito \cite{NaNiSa} conjectured that the $f$-chromatic index $\chi_f'(G)$ should be essentially determined by either $\Delta_f(G)$ or $\mathcal{W}_f(G)$.
\begin{conj} \label{Goldberg-Seymour-f}
For every loopless multigraph $G$ and function $f : V(G) \rightarrow \mathbb{N} \backslash \{0\}$, we have
\begin{align*}
    \chi_f'(G) \le \max \left\{\Delta_f(G) + 1, \mathcal{W}_f(G)\right\}
\end{align*}
\end{conj}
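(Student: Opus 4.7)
The plan is to derive the conjecture from the oriented Goldberg--Seymour formula announced in the abstract. The bridge is the observation that any $(g,h)$-oriented edge-coloring with $g(v)+h(v)=f(v)$ is automatically an $f$-coloring, since each color class then has total degree at most $f(v)$ at every $v$. Thus $\chi'_f(G) \le \chi'_{g,h}(G)$ for any such split, and I would choose $(g,h)$ so that the resulting oriented upper bound $\max\{\Delta_{g,h}(G)+1,\mathcal{W}_{g,h}(G)\}$ is controlled by $\max\{\Delta_f(G)+1,\mathcal{W}_f(G)\}$.

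Matching the parameters, the degree term drops out automatically since each oriented color class has degree at most $g(v)+h(v) = f(v)$ at $v$, forcing the same per-vertex lower bound $\Delta_f(G)$. For the density term, each edge of $G[S]$ in a single oriented color class contributes simultaneously to an indegree sum (bounded by $g(S)$) and an outdegree sum (bounded by $h(S)$) over $S$, so at most $\min(g(S),h(S))$ such edges fit. Taking $g=h=f/2$ when all $f(v)$ are even gives exactly $\lfloor f(S)/2 \rfloor$, so the even case of the conjecture reduces in a single line to the oriented formula.

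The main obstacle is the odd case: the natural split $g = \lfloor f/2 \rfloor$, $h = \lceil f/2 \rceil$ yields $\min(g(S),h(S)) < \lfloor f(S)/2 \rfloor$ whenever $S$ contains two or more vertices with $f(v)$ odd, so the oriented density bound is strictly pessimistic. My first fix would be to attach an auxiliary pendant matching on the odd-$f$ vertices so that the new degree function becomes even, apply the even case of the oriented formula, and then strip the auxiliary edges, checking that neither $\Delta_f$ nor $\mathcal{W}_f$ is inflated in the process. A second option is to let the splitting $(g,h)$ vary across color classes, alternating which odd-$f$ vertex gets the extra indegree capacity so that parity slack cancels on average. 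Showing that one of these reductions stays within the original parameters, rather than incurring a $+1$ somewhere that wrecks the bound, is the delicate step; if it fails, the fallback is to adapt the Chen--Jing--Zang Tashkinov-tree argument for $f=1$ directly to $f$-colorings rather than routing through the oriented formula at all.
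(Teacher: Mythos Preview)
The statement you are trying to prove is not proved in the paper: it is stated there as Conjecture~\ref{Goldberg-Seymour-f}, an open problem. The paper only obtains the approximation in Theorem~\ref{odd-cor}, namely
\[
\chi_f'(G) \le \max\Bigl\{\Delta_f(G),\ \max_{|S|\ge 2}\Bigl\lceil \tfrac{e(S)}{\sum_{v\in S}\lfloor f(v)/2\rfloor}\Bigr\rceil\Bigr\},
\]
which is exactly what your split $g=\lfloor f/2\rfloor$, $h=\lceil f/2\rceil$ yields, and is strictly weaker than the conjecture whenever $S$ contains at least two odd-$f$ vertices. Your even-case reduction is correct and is precisely Corollary~\ref{bipartite-even}(ii) in the paper, which in fact gives the sharper $\chi_f'(G)=\Delta_f(G)$.

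Your proposed fixes for the odd case cannot close the gap. The decisive obstruction is $f\equiv 1$: here every vertex is odd, and the conjecture specializes to the original Goldberg--Seymour conjecture for proper edge-colorings, whose only claimed proof is the long Chen--Jing--Zang manuscript. In this regime your oriented framework does not even get started, since any split with $g(v)+h(v)=1$ forces $g(v)=0$ or $h(v)=0$ at every vertex, so $(g,h)$ fails to be a valid pair on any non-bipartite component (the condition $g(u)+g(v)\ge 1$ for adjacent $u,v$ breaks), and a $(g,h)$-orientable subgraph is then merely a matching compatible with some fixed bipartition rather than an arbitrary matching. Your pendant-matching trick cannot rescue this: attaching a pendant at each odd-$f$ vertex either leaves $f(v)$ odd or, if you raise $f(v)$ by one to make it even, allows color classes with degree $f(v)+1$ at $v$, which are no longer $f$-subgraphs of $G$; and the new pendant vertices themselves carry $f$-value~$1$, so you have not eliminated odd vertices. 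The alternating-split idea likewise fails for $f\equiv 1$, since every admissible split produces bipartition-restricted matchings, and no averaging over bipartitions recovers arbitrary proper edge-colorings. In short, the oriented formula genuinely requires $f(v)\ge 2$ to say anything, and the paper's Theorem~\ref{odd-cor} is the honest limit of this method; your fallback of redoing Tashkinov trees for general $f$ is the only viable route, and that is not what the paper does.
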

This is the Goldberg-Seymour Conjecture for $f$-colorings. Hakimi and Kariv \cite{HaKa} proved that $\chi_f'(G) \le \Delta_f(G)+1$ if $G$ is a simple graph, generalizing the classical upper bound of Vizing for the chromatic index \cite{Vi}. They also proved a generalization of Shannon's upper bound \cite{Sh}. Nakano, Nishizeki, and Saito \cite{NaNiSa} themselves approximated Conjecture \ref{Goldberg-Seymour-f} to within a factor of $9/8$. Stiebitz, Scheide, Toft, and Favrholdt \cite{St} have proven a fractional version of Conjecture \ref{Goldberg-Seymour-f}. Much better approximations have been obtained for the case $f=1$ of the chromatic index using the tool Tashkinov trees (see \cite{ChGaKiPoSh, ChYuZa, Sc, Ta}). A proof of the case $f=1$ has even recently been announced by Chen, Jing, and Zang \cite{ChJiZa}, although it awaits verification. See \cite{St} for an overview of results relevant for Conjecture \ref{Goldberg-Seymour-f}.

\subsection{Oriented-colorings}

In this paper we introduce an oriented version of $f$-colorings, from which a result analogous to the Goldberg-Seymour Conjecture \ref{Goldberg-Seymour-f} for $f$-colorings can easily be proven. Given two functions $g, h : V(G) \rightarrow \mathbb{N} \cup \{\infty\}$, we say that a multigraph $G$ (possibly with loops) is \textit{$(g,h)$-orientable} if it has an orientation $D$ such that every vertex $v$ of $D$ has indegree $d_D^-(v)$ at most $g(v)$ and outdegree $d_D^+(v)$ at most $h(v)$. We call such an orientation $D$ a \textit{$(g,h)$-orientation}. A characterization of $(g,h)$-orientable multigraphs has already been given in a relatively little-known paper of Entringer and Tolman \cite{EnTo}, but these kinds of multigraphs have not before been the subject of edge-colorings except in a few special cases. A \textit{$(g,h)$-oriented-coloring} of $G$ is an assignment of a color to each edge of $G$ such that each color class is a $(g,h)$-orientable subgraph of $G$. We define the \textit{$(g,h)$-oriented chromatic index} $\chi'_{(g,h)}(G)$ to be the minimum number of colors needed in a $(g,h)$-oriented-coloring of $G$. 

To ensure that $\chi'_{(g,h)}(G)$ exists, we will assume that the following conditions hold:
\begin{itemize}
    \item $g(v) + h(v) \ge 1$ for all $v \in V(G)$,
    \item $g(u) + g(v) \ge 1$ for all adjacent $u,v \in V(G)$, and
    \item $h(u) + h(v) \ge 1$ for all adjacent $u,v \in V(G)$,
\end{itemize}
where $v$ is considered adjacent to itself if there is a loop at $v$. The first condition ensures that every vertex $u$ accepts either the head or the tail of any arc incident to it, the second condition ensures that every neighbor $v$ of $u$ accepts the head of an arc if $u$ does not, and the third condition ensures that the neighbors of $u$ accept the tail of an arc if $u$ does not. We call $(g,h)$ a \textit{valid pair of functions} for $G$ if they satisfy these three conditions.

Let $G$ be a multigraph and let $(g,h)$ be a valid pair of functions for $G$. As with the $f$-chromatic index $\chi_f'(G)$, there is a maximum degree and a maximum density lower bound for $\chi'_{(g,h)}(G)$. Define the weighted maximum degree parameter
\begin{align*}
    \Delta_{(g,h)}(G) = \max_{v \in V(G)} \left\lceil \frac{d(v)}{g(v) + h(v)} \right\rceil
\end{align*}
and the weighted maximum density parameter
\begin{align*}
    \mathcal{W}_{(g,h)}(G) = \max_{\substack{S \subseteq V(G),\\ g(S), h(S) \ge 1}} \left\lceil \frac{e(S)}{\min\left\{g(S), h(S)\right\}} \right\rceil.
\end{align*}
(To deal with denominators of $\infty$, we may replace outputs of $\infty$ in $g$ and $h$ by $\Delta(G)$.) Observe that $\chi'_{(g,h)}(G) \ge \Delta_{(g,h)}(G)$: in every $(g,h)$-orientable subgraph of $G$, every vertex $v$ has degree at most $g(v) + h(v)$. Also observe that $\chi'_{(g,h)}(G) \ge \mathcal{W}_{(g,h)}(G)$: for every $S \subseteq V(G)$, every $(g,h)$-orientable subgraph of $G[S]$ has at most $g(S)$ edges (by summing indegrees) and at most $h(S)$ edges (by summing outdegrees), so at most $\min \{g(S), h(S)\}$ edges. We will prove the following Goldberg-Seymour result for the $(g,h)$-oriented chromatic index.

\begin{restatable}{thm}{orientedgoldbergseymour}\label{oriented-Goldberg-Seymour}
For every multigraph $G$ and valid pair of functions $(g,h)$, we have
\begin{align*}
    \chi'_{(g,h)}(G) = \max \left\{\Delta_{(g,h)}(G), \mathcal{W}_{(g,h)}(G)\right\}.
\end{align*}
\end{restatable}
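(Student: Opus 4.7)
The lower bound is already shown in the excerpt, so the real content is the upper bound $\chi'_{(g,h)}(G) \le k$, where $k := \max\{\Delta_{(g,h)}(G), \mathcal{W}_{(g,h)}(G)\}$. The plan is a two-step reduction: first reformulate the existence of a $(g,h)$-oriented $k$-coloring as the existence of a single $(kg,kh)$-orientation of $G$, then apply Entringer and Tolman's characterization of orientability under degree bounds.

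The key lemma I would prove is that $G$ admits a $(g,h)$-oriented $k$-coloring if and only if $G$ admits a $(kg,kh)$-orientation. The ``only if'' direction is immediate: the union of the chosen orientations on the $k$ color classes is a $(kg,kh)$-orientation. For the ``if'' direction, given a $(kg,kh)$-orientation $D$, I would construct the bipartite multigraph $B_D$ on vertex classes $V^- \sqcup V^+$ (two disjoint copies of $V(G)$) with one edge $u^+v^-$ for each arc $u \to v$ of $D$, and define $f^{\ast}(v^-) := g(v)$, $f^{\ast}(v^+) := h(v)$. The orientation bounds on $D$ translate exactly to $d_{B_D}(v^{\pm}) \le k\, f^{\ast}(v^{\pm})$, i.e.\ $\Delta_{f^{\ast}}(B_D) \le k$. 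Since $B_D$ is bipartite, the standard identity $\chi'_{f^{\ast}}(B_D) = \Delta_{f^{\ast}}(B_D)$ (a straightforward consequence of König's edge-coloring theorem via vertex splitting: split each $w$ of $B_D$ into $f^{\ast}(w)$ copies, distribute incident edges so each copy has degree at most $k$, and apply König) produces an $f^{\ast}$-coloring of $B_D$ in $k$ colors, which pulls back to a $(g,h)$-oriented $k$-coloring of $G$.

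With the lemma in hand, applying Entringer and Tolman's characterization to the pair $(kg,kh)$ gives that $G$ has a $(kg,kh)$-orientation if and only if $d_G(v) \le k(g(v)+h(v))$ for every $v$ and $e_G(S) \le k\min\{g(S),h(S)\}$ for every $S$ with $g(S),h(S) \ge 1$. Rearranging ceilings, these are precisely $k \ge \Delta_{(g,h)}(G)$ and $k \ge \mathcal{W}_{(g,h)}(G)$, which hold by our choice of $k$.

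The conceptual crux is the reformulation lemma: once one sees that the color-class choice decouples into ``pick an aggregate $(kg,kh)$-orientation'' plus ``$f^{\ast}$-edge-color the associated bipartite multigraph,'' the theorem drops out of two black-box results. The remaining care is bookkeeping for corner cases: $f^{\ast}(w) = 0$ forces $d_{B_D}(w) = 0$, which is in turn guaranteed by the validity conditions on $(g,h)$ combined with the orientation bound; $f^{\ast}(w) = \infty$ is absorbed into the vertex-splitting step (use $\lceil d_{B_D}(w)/k\rceil$ copies instead of $f^{\ast}(w)$); and a loop at $v$ contributes $1$ to both $d_D^-(v)$ and $d_D^+(v)$ and becomes a loop edge $v^+v^-$ in $B_D$, preserving every relevant inequality.
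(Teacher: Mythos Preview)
Your proof is correct and is essentially the paper's argument: the paper's Lemma~\ref{coloring-lemma} is exactly your reformulation lemma (proved via the same bipartite split plus K\"onig), and the paper then combines it with Entringer--Tolman just as you do, only phrased contrapositively (starting from ``$G$ is not $((k-1)g,(k-1)h)$-orientable'') rather than directly verifying the $(kg,kh)$ conditions. The one small point you glossed over is that Entringer--Tolman's condition~(2) ranges over \emph{all} $S$, including those with $g(S)=0$ or $h(S)=0$; these are handled automatically by the validity assumption $g(u)+g(v)\ge 1$ (resp.\ $h(u)+h(v)\ge 1$) for adjacent $u,v$, which forces $e_G(S)=0$ in those cases.
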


The proof will be elementary, following straightforwardly from known results. For much of the remainder of the paper, our focus will be on elementary applications of Theorem \ref{oriented-Goldberg-Seymour}. In Section \ref{on-pseudoarboricity}, we will use the theorem to solve the $f$-coloring problem for pseudoforests (Theorem \ref{f-pseudoarboricity}). In Section \ref{on-arboricity}, we will use this result on pseudoforests to approximate the $f$-coloring problem for forests (Theorem \ref{f-arboricity}). Specializing to $f = 2$, our result will give an upper bound on the linear arboricity of a loopless multigraph. In particular, in Section \ref{on-linear-arboricity} we will show that the unsolved Linear Arboricity Conjecture holds for $k$-degenerate loopless multigraphs $G$ when $\Delta(G) \ge 4k-2$ (Corollary \ref{linear-arboricity-corollary}), improving the bound $\Delta(G) \ge 2k^2 - 2k$ recently announced by Chen, Hao, and Yu \cite{ChHa} for simple graphs $G$. Then specializing to $f = t$, we will improve an old bound of Caro and Roditty \cite{CaRo} on the degree-$t$ arboricity $a_t(G)$ of a simple graph $G$ at least when $\Delta(G)$ is large enough (Corollary \ref{t-arboricity}), and our result will also apply more generally to when $G$ is a loopless multigraph. In Section \ref{on-f-chromatic-index}, we will note results on the $f$-chromatic index that we can obtain using Theorem \ref{oriented-Goldberg-Seymour}, including an approximation of the Goldberg-Seymour Conjecture \ref{Goldberg-Seymour-f} for $f$-colorings (Theorem \ref{odd-cor}).

Finally, in Section \ref{list}, using Galvin's list coloring theorem on bipartite multigraphs \cite{Ga} we will easily demonstrate that $\chi'_{(g,h)}(G)$ and its list coloring analogue $\chi'_{(g,h),\ell}(G)$ are always equal (Theorem \ref{list-version}), similar to the assertion of the currently unsolved List Coloring Conjecture for proper edge-colorings. We will then indicate direct consequences of this result for the list coloring analogues of the other edge-coloring parameters studied above.

\section{Proof of Theorem \ref{oriented-Goldberg-Seymour}} \label{proof}

Our proof of Theorem \ref{oriented-Goldberg-Seymour} will be a straightforward consequence of Lemma \ref{orientation-theorem} and Lemma \ref{coloring-lemma} below. It is inspired by the well-known bipartite matching proof of Petersen's 2-Factor Theorem (that every $2k$-regular multigraph can be decomposed into $k$ $2$-factors, see \cite{Di}). We will need the following classical edge-coloring theorem of K\"onig \cite{Ko}.

\begin{thm}[K\"onig]\label{Konig}
For every bipartite multigraph $G$, we have $\chi'(G) = \Delta(G)$.
\end{thm}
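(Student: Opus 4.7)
The lower bound $\chi'(G) \ge \Delta(G)$ is immediate, since the $d(v)$ edges at any vertex $v$ must receive pairwise distinct colors. For the upper bound I would follow the classical route of reducing to the regular case and then iteratively extracting perfect matchings via Hall's theorem.

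First I would embed $G$ into a $\Delta$-regular bipartite multigraph $G'$ on a (possibly larger) bipartition $(A', B')$ containing the original bipartition $(A, B)$, where $\Delta := \Delta(G)$. Pad the smaller side with isolated vertices so that $|A'| = |B'|$, and then, as long as some vertex of $A'$ has degree less than $\Delta$, add a new edge to some vertex of $B'$ of degree less than $\Delta$. Such a partner on the other side must exist because $\sum_{a \in A'} d(a) = e(G') = \sum_{b \in B'} d(b)$, so a deficit on one side forces a deficit on the other. The process terminates with a $\Delta$-regular bipartite multigraph $G'$ containing $G$ as a subgraph.

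Next I would establish the standard fact that every $k$-regular bipartite multigraph $H$ with $k \ge 1$ has a perfect matching. For any subset $S$ of one part, the $k|S|$ edge-endpoints in $S$ must all lie on vertices of $N(S)$, and those vertices together have only $k |N(S)|$ incident edge-endpoints in $H$, so $|N(S)| \ge |S|$; Hall's theorem then supplies a perfect matching of the bipartite simple graph underlying $H$, and the same vertex-pairing is a perfect matching in $H$ itself.

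The proof then concludes by iteration: extract a perfect matching $M_1$ of $G'$, delete its edges to obtain a $(\Delta-1)$-regular bipartite multigraph, and repeat. After $\Delta$ rounds the edge set of $G'$ decomposes into perfect matchings $M_1, \dots, M_\Delta$; restricting this decomposition to $G$ gives a proper edge-coloring with at most $\Delta$ colors. The main obstacle is really just the regularization step, and that is entirely elementary once one is comfortable with multigraphs — parallel edges are permitted throughout, and Hall's theorem applies verbatim via the underlying simple bipartite graph.
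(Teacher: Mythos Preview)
Your argument is correct and is one of the standard proofs of K\"onig's theorem. Note, however, that the paper does not actually prove this statement: it simply quotes it as a classical result (with a reference to K\"onig's original paper) and uses it as a black box in the proof of Lemma~\ref{coloring-lemma}, so there is no ``paper's own proof'' to compare against. Your regularize-then-peel-off-perfect-matchings approach via Hall's theorem is entirely fine; the other common textbook proof proceeds directly by an alternating-path (Kempe chain) argument without the regularization step, and the paper alludes to that version when it says its proof of Lemma~\ref{orientation-theorem} ``resembles the well-known alternating path proof of K\"onig's Theorem.''
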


We will also need the following less well-known result characterizing $(g,h)$-orientable multigraphs, due to Entringer and Tolman \cite{EnTo} (their Corollary 1(v)). It is a generalization of Hakimi's more well-known characterization of $(g,\infty)$-orientable multigraphs \cite{Ha1}.

\begin{restatable}[Entringer, Tolman]{lem}{orientationtheorem} \label{orientation-theorem}
For a multigraph $G$ and a pair of functions $(g,h)$, $G$ is $(g,h)$-orientable if and only if
\begin{enumerate}[label={(\arabic*)}]
    \item $d(v) \le g(v) + h(v)$ for all $v \in V(G)$, and
    \item $e(S) \le \min \{ g(S), h(S) \}$ for all $S \subseteq V(G)$.
\end{enumerate}
\end{restatable}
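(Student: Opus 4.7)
My plan has two parts. For \textbf{necessity}, I would observe that a $(g,h)$-orientation $D$ satisfies $d(v) = d_D^-(v) + d_D^+(v) \le g(v) + h(v)$, giving~(1), and that for any $S \subseteq V(G)$ the induced orientation $D[S]$ yields $e(S) = \sum_{v \in S} d_{D[S]}^-(v) \le g(S)$ and symmetrically $e(S) \le h(S)$, giving~(2).

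For \textbf{sufficiency}, I would first reformulate the problem as an orientation with a range constraint on indegrees. Since $d_D^+(v) \le h(v)$ is equivalent to $d_D^-(v) \ge d(v) - h(v)$, finding a $(g,h)$-orientation amounts to finding an orientation with $d_D^-(v) \in [l(v), g(v)]$ for every $v$, where $l(v) = \max\{0, d(v) - h(v)\}$. This is a classical degree-constrained orientation problem, solvable by max-flow with upper and lower bounds (equivalently, Hoffman's circulation theorem applied to the natural source-to-edge-to-vertex-to-sink network). The standard min-cut analysis shows that such an orientation exists if and only if, for every $S \subseteq V(G)$, both the upper-bound cut condition $e(S) \le g(S)$ and the lower-bound cut condition $e(S) \le \sum_{v \in S} \min\{d(v), h(v)\}$ hold; the latter guarantees that the edges incident to $S$ can supply the total indegree demand $l(S)$.

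The upper-bound condition is immediate from hypothesis~(2). The crux is deducing the lower-bound condition. Here I would set $S' = \{v \in S : d(v) > h(v)\}$, so that $\sum_{v \in S} \min\{d(v), h(v)\} = h(S') + d(S \setminus S')$, and decompose $e(S) = e(S') + e(S \setminus S') + e(S', S \setminus S')$. Hypothesis~(2) applied to $S'$ gives $e(S') \le h(S')$, while a simple degree count gives $e(S \setminus S') + e(S', S \setminus S') \le d(S \setminus S')$, since every edge contributing to these two terms has at least one endpoint in $S \setminus S'$ and is therefore counted in that sum. Adding the two bounds yields the required inequality. I expect the main obstacle to be cleanly invoking the flow-based existence result for orientations with mixed indegree bounds, the essential step that generalizes Hakimi's theorem from a one-sided to a two-sided constraint; the degree bookkeeping described above is then routine.
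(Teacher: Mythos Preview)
Your proof is correct and takes a genuinely different route from the paper. The paper (in its Appendix) gives a self-contained augmenting-path argument: it fixes a maximum partial $(g,h)$-orientation $D$, assumes some edge $e$ is still unoriented, and by analysing which of the bounds $d_D^-(\cdot)\le g(\cdot)$, $d_D^+(\cdot)\le h(\cdot)$ are tight at the endpoints of $e$, grows a set $S$ of vertices reachable by directed paths; either reversing such a path enlarges $D$, or $S$ witnesses a violation of condition~(2). Your approach instead translates ``$d_D^+(v)\le h(v)$'' into the lower indegree bound $d_D^-(v)\ge l(v)=\max\{0,d(v)-h(v)\}$ and invokes the classical two-sided degree-constrained orientation theorem (Hoffman/Frank): an orientation with $l\le d^-\le g$ exists iff $e(S)\le g(S)$ and $l(S)\le e(S)+e(S,\bar S)$ for every $S$. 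You then verify the second inequality from hypothesis~(2) via the split $S'=\{v\in S:d(v)>h(v)\}$, which is clean and correct (and your rewriting of the lower-bound condition as $e(S)\le\sum_{v\in S}\min\{d(v),h(v)\}$ is exactly equivalent to $l(S)\le e(S)+e(S,\bar S)$ after using $d(S)=2e(S)+e(S,\bar S)$).

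The paper's argument buys self-containment and an explicit algorithmic flavour, mirroring the alternating-path proof of K\"onig's theorem; yours is shorter once the flow black box is granted, and makes explicit that the lemma is a specialisation of standard degree-constrained orientation theory. One small point worth stating in your write-up: condition~(1) is precisely what guarantees $l(v)\le g(v)$, so that the interval $[l(v),g(v)]$ is nonempty; this does not follow from the two cut conditions alone (take $S=\{v\}$) and is needed before invoking the orientation theorem.
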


Conditions (1) and (2) in Lemma \ref{orientation-theorem} are clearly necessary, as explained in the introduction. The main theorem of Entringer and Tolman actually also includes lower bounds for the indegree and outdegree constraints. For completeness, we reprove the special case of Lemma \ref{orientation-theorem} in Appendix \ref{appendix}. Our path reversal proof resembles the well-known alternating path proof of K\"onig's Theorem \ref{Konig} (see \cite{Di}), although it is independent of it. 

Now, we observe that Lemma \ref{orientation-theorem} implies Theorem \ref{oriented-Goldberg-Seymour} when $\chi'_{(g,h)}(G)=1$. To prove Theorem \ref{oriented-Goldberg-Seymour} for larger values of $\chi'_{(g,h)}(G)$, we prove the following consequence of K\"onig's Theorem \ref{Konig}.

\begin{lem}\label{coloring-lemma}
For a multigraph $G$, a valid pair of functions $(g,h)$, and an integer $k \ge 1$, we have $\chi_{(g,h)}'(G) \le k$ if and only if $G$ is $(kg, kh)$-orientable.
\end{lem}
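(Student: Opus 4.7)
The plan is to handle the two directions separately, with the nontrivial direction following the classical bipartite-matching reduction used in the proof of Petersen's 2-Factor Theorem. The forward direction is immediate: given a $(g,h)$-oriented $k$-edge-coloring with color classes $E_1, \dots, E_k$ and associated $(g,h)$-orientations $D_1, \dots, D_k$, the union $D := D_1 \cup \cdots \cup D_k$ is an orientation of $G$ in which every vertex $v$ has $d_D^-(v) \le k \cdot g(v)$ and $d_D^+(v) \le k \cdot h(v)$, so $G$ is $(kg, kh)$-orientable.

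For the backward direction I start from a $(kg, kh)$-orientation $D$ of $G$ and construct an auxiliary bipartite multigraph $H$ as follows. For each vertex $v \in V(G)$, I introduce $a(v) := \lceil d_D^+(v)/k \rceil$ ``out-slots'' $O_1(v), \dots, O_{a(v)}(v)$ on the left side of $H$ and $b(v) := \lceil d_D^-(v)/k \rceil$ ``in-slots'' $I_1(v), \dots, I_{b(v)}(v)$ on the right side. Since $d_D^+(v) \le kh(v)$ and $d_D^-(v) \le kg(v)$, we have $a(v) \le h(v)$ and $b(v) \le g(v)$. I distribute the outgoing arcs at $v$ among its out-slots so that each slot receives at most $k$ arcs, and do the analogous thing for incoming arcs and in-slots. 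For each arc $(u,v)$ of $D$, I then add an edge in $H$ between the out-slot of $u$ and the in-slot of $v$ to which it was assigned. By construction $\Delta(H) \le k$, so K\"onig's Theorem \ref{Konig} produces a proper $k$-edge-coloring of $H$.

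I then transfer this coloring to $G$: each edge of $G$ inherits the color of its corresponding edge in $H$. For any color $c$ and any vertex $v$, the arcs of color $c$ incident to $v$ (using the orientation inherited from $D$) use at most one edge at each of $v$'s out-slots and in-slots, by properness of the $H$-coloring; hence the color-$c$ subgraph has outdegree at most $a(v) \le h(v)$ and indegree at most $b(v) \le g(v)$ at $v$, so it is $(g,h)$-orientable via the inherited orientation. This yields $\chi'_{(g,h)}(G) \le k$.

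The main obstacle is just careful bookkeeping around the boundary cases. If $g(v) = 0$ or $h(v) = 0$, then $kg(v) = 0$ or $kh(v) = 0$, so no in- or out-arcs exist at $v$ and no corresponding slots are created; if $g(v) = \infty$ or $h(v) = \infty$, then the relevant upper bound is vacuous, while only the finite number $\lceil d_D^{\pm}(v)/k \rceil$ of slots are actually used. Neither degeneracy interferes with the reduction to K\"onig's Theorem, and the rest of the argument is a clean pigeonhole-style construction.
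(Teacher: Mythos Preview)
Your proof is correct and follows essentially the same approach as the paper: both directions are handled identically, with the backward direction reducing to K\"onig's Theorem via a bipartite split of each vertex into out-copies and in-copies so that arcs can be distributed with maximum degree at most $k$. The only cosmetic difference is that the paper uses $\min\{\lceil d_G(v)/k\rceil, h(v)\}$ and $\min\{\lceil d_G(v)/k\rceil, g(v)\}$ copies whereas you use the (possibly smaller) counts $\lceil d_D^+(v)/k\rceil$ and $\lceil d_D^-(v)/k\rceil$, but this changes nothing substantive.
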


\begin{proof}
Suppose first that $\chi_{(g,h)}'(G) \le k$. Consider a $(g,h)$-oriented edge-coloring of $G$ using $k$ colors. Giving each of these $k$ color classes a $(g,h)$-orientation, we can combine these orientations to give a $(k g, k h)$-orientation of $G$. Conversely, suppose that $G$ has a $(k g, k h)$-orientation $D$. We wish to color the arcs of $D$ using $k$ colors so that in each color class, every vertex $v$ of $D$ has indegree at most $g(v)$ and outdegree at most $h(v)$; this would imply that $\chi_{(g,h)}'(G) \le k$. Construct an auxiliary bipartite multigraph $H$ as follows. We have two parts $X$ and $Y$, and for each vertex $v$ of $G$ we put $\min \{\lceil d_G(v)/k \rceil, h(v)\}$ copies of $v$ in $X$ and $\min \{\lceil d_G(v)/k \rceil, g(v)\}$ copies of $v$ in $Y$. For each arc of $D$ with tail $u$ and head $v$, we put an edge in $H$ between some copy of $u$ in $X$ and some copy of $v$ in $Y$. We do this in such a way that every vertex of $H$ has degree at most $k$, which is possible because every vertex $v$ of $D$ has indegree at most $\min \{d_G(v), k \cdot g(v)\}$ and outdegree at most $\min \{d_G(v), k \cdot h(v)\}$. By K\"onig's Theorem \ref{Konig}, $H$ has proper $k$-edge-coloring. Merging the copies of each vertex of $D$ back to a single vertex, this proper $k$-edge-coloring of $H$ gives a desired coloring of the arcs of $D$.
\end{proof}

Theorem \ref{oriented-Goldberg-Seymour} now follows quickly from combining Lemma \ref{orientation-theorem} and Lemma \ref{coloring-lemma}.

\orientedgoldbergseymour*
\begin{proof}
We have already explained that $\chi_{(g,h)}'(G) \ge \max \left\{ \Delta_{(g,h)}(G), \mathcal{W}_{(g,h)}(G) \right\}$. We also already observed that the theorem holds when $\chi_{(g,h)}'(G) = 1$, by Lemma \ref{orientation-theorem}. Now suppose that $\chi_{(g,h)}'(G) = k \ge 2$. By Lemma \ref{coloring-lemma}, $G$ is not $((k-1)g,(k-1)h)$-orientable. By Lemma \ref{orientation-theorem}, this means that either $d(v) \ge (k-1)g(v) + (k-1)h(v) + 1$ for some $v \in V(G)$, or $e(S) \ge \min\{(k-1)g(S), (k-1)h(S)\} + 1$ for some $S \subseteq V(G)$ with $g(S), h(S) \ge 1$. In the former case, we find that
\begin{align*}
    \chi_{(g,h)}'(G) = k \le \frac{d(v)}{g(v)+h(v)} + \left( 1 - \frac{1}{g(v)+h(v)} \right) < \Delta_{(g,h)}(G) + 1,
\end{align*}
so $\chi_{(g,h)}'(G) \le \Delta_{(g,h)}(G)$. In the latter case, we find that
\begin{align*}
    \chi_{(g,h)}'(G) = k \le \frac{e(S)}{\min\{g(S),h(S)\}} + \left( 1 - \frac{1}{\min\{g(S),h(S)\}} \right) < \mathcal{W}_{(g,h)}(G) + 1,
\end{align*}
so $\chi_{(g,h)}'(G) \le \mathcal{W}_{(g,h)}(G)$. Therefore $\chi_{(g,h)}'(G) \le \max \left\{ \Delta_{(g,h)}(G), \mathcal{W}_{(g,h)}(G) \right\}$.
\end{proof}

We remark that the $(g,\infty)$-orientable subgraphs of a multigraph $G$ form the independent sets of a matroid (see \cite{GaWe}), and that Theorem \ref{oriented-Goldberg-Seymour} in this setting specializes to $\chi_{(g,\infty)}'(G) = \mathcal{W}_{(g,\infty)}(G)$, which is a standard application of Edmonds' Matroid Partition Theorem \cite{Ed}.

\section{Applications}

\subsection{On pseudoarboricity} \label{on-pseudoarboricity}
One immediate application of Theorem \ref{oriented-Goldberg-Seymour} is solving the $f$-coloring problem for pseudoforests. A \textit{pseudoforest} is a multigraph such that every connected component has at most one cycle (possibly a loop). The \textit{pseudoarboricity} $pa(G)$ of a multigraph $G$ is the minimum number of colors needed in an edge-coloring of $G$ such that each color class is a pseudoforest. It is well-known that pseudoforests are exactly the $(1,\infty)$-orientable multigraphs (see \cite{GaWe}). Applying Theorem \ref{oriented-Goldberg-Seymour} with $(g,h) = (1,\infty)$ implies Hakimi's theorem \cite{Ha1} that the pseudoarboricity of every multigraph $G$ is given by
\begin{align*}
    pa(G) = \max_{S \subseteq V(G), |S| \ge 1} \left\lceil \frac{e(S)}{|S|} \right\rceil.
\end{align*}

We observe a more general result. For a multigraph $G$ and a function $f : V(G) \rightarrow \mathbb{N} \backslash \{0,1\}$, we call $G$ a \textit{degree-$f$ pseudoforest} if it is a pseudoforest such that every vertex $v$ of $G$ has degree at most $f(v)$. The \textit{degree-$f$ pseudoarboricity} $pa_f(G)$ of a multigraph $G$ is the minimum number of colors needed in an edge-coloring of $G$ such that each color class is a degree-$f$ pseudoforest.  

\begin{obs} \label{observe0}
The degree-$f$ pseudoforests are exactly the $(1,f-1)$-orientable multigraphs.
\end{obs}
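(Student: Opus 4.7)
The plan is to reduce the observation to Lemma~\ref{orientation-theorem} (Entringer-Tolman) applied with $(g,h) = (1, f-1)$, together with the well-known characterization of pseudoforests as the $(1,\infty)$-orientable multigraphs invoked earlier in this subsection.

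The reverse direction is immediate: if $G$ is $(1,f-1)$-orientable, then it is in particular $(1,\infty)$-orientable, hence a pseudoforest, and every vertex satisfies
\[
    d(v) = d^-(v)+d^+(v) \le 1+(f(v)-1) = f(v),
\]
so $G$ is a degree-$f$ pseudoforest.

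For the forward direction, I would verify the two conditions of Lemma~\ref{orientation-theorem} with $(g,h)=(1,f-1)$. Condition (1), $d(v) \le f(v)$, is precisely the degree-$f$ hypothesis. For condition (2), I must show that $e(S) \le \min\{|S|,\, f(S)-|S|\}$ for every $S \subseteq V(G)$. The bound $e(S) \le |S|$ is the standard fact that any induced subgraph of a pseudoforest is a pseudoforest and that a pseudoforest on $|S|$ vertices has at most $|S|$ edges. The bound $e(S) \le f(S)-|S|$ will follow from the double-counting inequality $2e(S) \le \sum_{v \in S} d(v) \le f(S)$ combined with the hypothesis $f(v) \ge 2$, which gives $f(S) \ge 2|S|$ and hence $f(S)/2 \le f(S)-|S|$.

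I do not expect any real obstacle; the argument is a direct verification via the Entringer-Tolman characterization. The only subtle point worth flagging is that the density bound $e(S) \le f(S)-|S|$ genuinely uses the condition $f \ge 2$, which is precisely why this assumption is built into the definition of a degree-$f$ pseudoforest.
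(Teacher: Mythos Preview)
Your argument is correct, and both directions go through as you describe. The verification of condition~(2) in Lemma~\ref{orientation-theorem} is clean: $e(S)\le |S|$ because induced subgraphs of pseudoforests are pseudoforests, and $e(S)\le f(S)-|S|$ follows from $2e(S)\le f(S)$ together with $f(S)\ge 2|S|$.

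The approach, however, differs from the paper's. The paper does not invoke Lemma~\ref{orientation-theorem} here; instead it constructs a $(1,f-1)$-orientation explicitly. Each cycle is oriented as a directed cycle, and each remaining tree is rooted (at its unique cycle vertex if the component is cyclic, at a leaf otherwise) and oriented away from the root. One then checks directly that every vertex has indegree at most~$1$ and outdegree at most $f(v)-1$, with the choice of a leaf as root in the acyclic case being the place where $f\ge 2$ enters. Your route is shorter and cleanly reuses the Entringer--Tolman machinery already in place, at the cost of being non-constructive; the paper's route is self-contained and yields an actual orientation, which aligns with the later explicit construction in Observation~8 (decomposing a degree-$2f$ pseudoforest into two degree-$f$ forests).
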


To see this, clearly every $(1, f-1)$-orientable multigraph is a degree-$f$ subgraph, and since this multigraph is necessarily $(1, \infty)$-orientable, it is also a pseudoforest. Conversely, given a degree-$f$ pseudoforest, start by orienting the edges of the cycles to form directed cycles. The unoriented edges now form disjoint trees. Root these trees at some vertex; if the tree is part of a cyclic component of the pseudoforest, then root that tree at the unique vertex in the cycle; otherwise, root the tree at a leaf. Now we just orient the edges of each tree away from the root of that tree, and the result will be a $(1, f-1)$-orientation of the pseudoforest.

Applying Theorem \ref{oriented-Goldberg-Seymour} with $(g,h) = (1,f-1)$ gives the following Goldberg-Seymour result for the degree-$f$ pseudoarboricity of a multigraph.

\begin{thm}\label{f-pseudoarboricity}
For every multigraph $G$ and function $f : V(G) \rightarrow \mathbb{N} \backslash \{0,1\}$, we have
\begin{align*}
    pa_f(G) = \max \left\{ \Delta_f(G), pa(G) \right\}.
\end{align*}
\end{thm}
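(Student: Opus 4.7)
The plan is to apply Theorem \ref{oriented-Goldberg-Seymour} with the pair $(g,h) = (1, f-1)$, using Observation \ref{observe0} as the translation between degree-$f$ pseudoforest colorings and $(1,f-1)$-oriented-colorings. Concretely, Observation \ref{observe0} gives that an edge-coloring of $G$ in which every color class is a degree-$f$ pseudoforest is the same thing as a $(1,f-1)$-oriented-coloring, so $pa_f(G) = \chi'_{(1,f-1)}(G)$.

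First I would verify that $(1, f-1)$ is a valid pair of functions for $G$: since $f(v) \ge 2$, we have $g(v)+h(v) = f(v) \ge 2$, $g(u)+g(v) = 2$, and $h(u)+h(v) \ge 2$, so all three conditions hold and Theorem \ref{oriented-Goldberg-Seymour} applies. Then I would compute the two weighted parameters. For the degree parameter,
\begin{align*}
    \Delta_{(1,f-1)}(G) = \max_{v} \left\lceil \frac{d(v)}{1 + (f(v)-1)} \right\rceil = \max_v \left\lceil \frac{d(v)}{f(v)} \right\rceil = \Delta_f(G).
\end{align*}
For the density parameter, note that $g(S) = |S|$ and $h(S) = f(S) - |S|$, and since $f(v) \ge 2$ we have $h(S) \ge |S| = g(S)$, so $\min\{g(S), h(S)\} = |S|$. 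Also, since $f(v)\ge 2$, the constraint $g(S), h(S) \ge 1$ is equivalent to $|S| \ge 1$. Hence
\begin{align*}
    \mathcal{W}_{(1,f-1)}(G) = \max_{S \subseteq V(G),\, |S| \ge 1} \left\lceil \frac{e(S)}{|S|} \right\rceil = pa(G),
\end{align*}
the last equality being Hakimi's formula already recalled in this subsection.

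Combining these computations with $pa_f(G) = \chi'_{(1,f-1)}(G)$ and Theorem \ref{oriented-Goldberg-Seymour} yields the claim. There is essentially no obstacle: the only thing to be mildly careful about is the side condition $g(S), h(S) \ge 1$ in the definition of $\mathcal{W}_{(g,h)}$, and the fact that it collapses to $|S| \ge 1$ here is exactly where the hypothesis $f \ge 2$ is used (and also where the assumption $f \ge 2$ is needed to ensure that $h = f-1$ is nonnegative in the first place, so that Observation \ref{observe0} even makes sense).
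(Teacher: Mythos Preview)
Your proposal is correct and follows exactly the approach the paper uses: applying Theorem \ref{oriented-Goldberg-Seymour} with $(g,h)=(1,f-1)$ via Observation \ref{observe0}, and identifying the resulting degree and density parameters with $\Delta_f(G)$ and Hakimi's formula for $pa(G)$. You have simply written out explicitly the verifications (validity of the pair, the two parameter computations) that the paper leaves to the reader.
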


This solves the $f$-coloring problem for pseudoforests. Note that once we start to consider the case when $f$ takes value $1$ on at least two vertices, degree-$f$ pseudoforests can no longer be characterized as $(1,f-1)$-orientable multigraphs. In particular, we run into the much more difficult problem of proper edge-colorings once $f=1$.

\subsection{On arboricity} \label{on-arboricity}

Now we discuss the $f$-coloring problem for forests. Analogous to pseudoarboricity, the \textit{arboricity} $a(G)$ of a loopless multigraph $G$ is the minimum number of colors needed in an edge-coloring of $G$ such that each color class is a forest. A celebrated theorem of Nash-Williams \cite{Na} states that the arboricity of a loopless multigraph $G$ is given by
\begin{align*}
    a(G) = \max_{S \subseteq V(G), |S| \ge 2} \left\lceil \frac{e(S)}{|S|-1} \right\rceil,
\end{align*}
analogous to Hakimi's formula for pseudoarboricity. For a loopless multigraph $G$ and a function $f : V(G) \rightarrow \mathbb{N} \backslash \{0,1\}$, we will say that $G$ is a \textit{degree-$f$ forest} if it is a forest such that every vertex $v$ of $G$ has degree at most $f(v)$. The \textit{degree-$f$ arboricity} $a_f(G)$ is the minimum number of colors needed in an edge-coloring of $G$ such that each color class is a degree-$f$ forest. Compared to the degree-$f$ pseudoarboricity, the degree-$f$ arboricity of a loopless multigraph appears far more difficult to determine. In particular, the case $f=2$ is the well-known problem of linear arboricity, which we will return to soon. First we will show what we can establish for general $f$.

As usual, there is a maximum degree and a maximum density lower bound for the degree-$f$ arboricity: $a_f(G) \ge \Delta_f(G)$ and $a_f(G) \ge a(G)$. It would be interesting to study the extent to which $a_f(G)$ is determined by $\Delta_f(G)$ and $a(G)$. Here, we obtain an upper bound for $a_f(G)$ using Theorem \ref{f-pseudoarboricity} on the degree-$f$ pseudoarboricity $pa_f(G)$ as well as the following simple observation.

\begin{obs}
Every loopless degree-$2f$ pseudoforest can be edge-colored into 2 degree-$f$ forests.
\end{obs}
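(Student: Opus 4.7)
The plan is to construct the $2$-edge-coloring explicitly, handling each connected component of $G$ independently. Within a component I would first color the edges of the unique cycle $Z$ (if there is one), and then extend the coloring to the pendant trees by a greedy BFS from the cycle vertices (or from an arbitrary root if the component is a tree). Call the colors A and B; the targets are that at every vertex $v$ both the color-A degree and the color-B degree are at most $f(v)$, and that each of the two color classes is acyclic.

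For the cycle $Z$, if $|Z|$ is even I would alternate colors around $Z$, so every cycle vertex sees exactly one cycle edge of each color. If $|Z|$ is odd I would put one edge of $Z$ in color A and the remaining $|Z|-1$ edges in color B, so the two endpoints of the color-A edge each see one cycle edge of each color while every other cycle vertex sees two color-B cycle edges and no color-A cycle edge. In either case neither color class contains all edges of $Z$. Now for the BFS, at each vertex $v$ let $p_A, p_B$ denote the color-A and color-B counts among the edges already assigned to $v$ (the cycle edges plus, for non-cycle vertices, the parent edge). Distributing the remaining $d(v) - p_A - p_B$ edges to the children between the two colors while respecting the cap $f(v)$ reduces to choosing an integer in the interval $\bigl[\max\{0, d(v) - p_A - f(v)\},\ \min\{c_v, f(v) - p_A\}\bigr]$, which is nonempty exactly when $d(v) \le 2f(v)$ and $p_A, p_B \le f(v)$. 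The first holds by hypothesis; the cycle-coloring recipe above and the fact that the parent edge contributes at most $1$ give $p_A, p_B \le 2 \le f(v)$, using the standing assumption $f \ge 2$.

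The main point of the argument is the acyclicity of each color class, and this is where the construction has to be designed carefully. Each color class is a subgraph of the pseudoforest $G$, hence itself a pseudoforest, so its only potential cycle is one of the original cycles $Z$; but by construction each color class omits at least one edge of each such $Z$, so each color class is in fact a forest. The only sub-case that feels at all delicate is the odd cycle, where color B absorbs two cycle edges at every interior cycle vertex, and one needs $f \ge 2$ both to keep those vertices legal and to have the BFS succeed at them — this is precisely the place where the hypothesis $f(v) \ge 2$ is used.
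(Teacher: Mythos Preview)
Your argument is correct and matches the paper's proof essentially line for line: the paper also two-colors each cycle so that it is not monochromatic (using $f\ge 2$ to ensure the cycle vertices stay within their caps), roots each pendant tree at its cycle vertex (or arbitrarily in an acyclic component), greedily extends by BFS, and concludes acyclicity from the fact that the only cycles in a pseudoforest are the component cycles, none of which is monochromatic. Your write-up simply spells out the odd/even cycle cases and the interval feasibility check that the paper leaves implicit.
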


To see this, start by two-coloring the edges in every cycle of the pseudoforest so that no cycle is monochromatic. This partial coloring is valid by the assumption that $f(v) \ge 2$ for all $v \in V(G)$. The uncolored edges now form disjoint trees. Root these trees at some vertex; if the tree is part of a cyclic component of the pseudoforest, then root the tree at the unique vertex in the cycle; otherwise, root the tree at an arbitrary vertex. Now we greedily color the edges of each tree in a breadth-first search ordering starting at the root, doing this in such a way to satisfy the degree constraint at each vertex for each of the two color classes. In the end, the degree constraints will be satisfied, and due to the first step there will not be any monochromatic cycle, proving the observation. 

This implies that $a_f(G) \le 2pa_{2f}(G)$. Applying Theorem \ref{f-pseudoarboricity} on degree-$f$ pseudoarboricity then gives us the following upper bound for $a_f(G)$.

\begin{thm}\label{f-arboricity}
For every loopless multigraph $G$ and function $f : V(G) \rightarrow \mathbb{N} \backslash \{0,1\}$, we have
\begin{align*}
    a_f(G) \le 2pa_{2f}(G) = \max \left\{2\Delta_{2f}(G), 2pa(G) \right\} \le \max \left\{\Delta_f(G)+1, 2 pa(G)\right\}.
\end{align*}
\end{thm}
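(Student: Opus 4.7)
The plan is to verify the displayed chain in three moves, each of which is essentially handed to us by an earlier result or a short arithmetic check.

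First I would establish the inequality $a_f(G) \le 2 pa_{2f}(G)$. To do this, take an optimal edge-coloring of $G$ into $pa_{2f}(G)$ degree-$2f$ pseudoforests, which exists by definition. By the Observation immediately preceding the theorem, each such pseudoforest (being loopless because $G$ is) admits a further 2-edge-coloring into degree-$f$ forests. Composing these two colorings gives an edge-coloring of $G$ into $2\,pa_{2f}(G)$ degree-$f$ forests, establishing the first inequality.

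Next, the middle equality $2pa_{2f}(G) = \max\{2\Delta_{2f}(G),\, 2pa(G)\}$ is just Theorem~\ref{f-pseudoarboricity} applied to the function $2f$ (which satisfies $2f(v) \ge 2$ for all $v$) and multiplied by $2$.

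The remaining step is the final inequality, which reduces via the identity $\max\{2A, 2C\} \le \max\{B+1, 2C\}$ (valid whenever $2A \le B+1$) to the pointwise claim
\begin{align*}
    2\left\lceil \frac{d(v)}{2f(v)} \right\rceil \le \left\lceil \frac{d(v)}{f(v)} \right\rceil + 1
\end{align*}
for each vertex $v$. The main (and only) obstacle is just to confirm this ceiling inequality, which I would do by writing $d(v) = 2f(v)q + r$ with $0 \le r < 2f(v)$ and doing a short case split on whether $r=0$, $1 \le r \le f(v)$, or $f(v) < r < 2f(v)$: in each case both sides can be computed explicitly in terms of $q$, and one checks the inequality holds (with equality in the two cases $0 < r \le f(v)$). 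Taking the maximum over $v$ then yields $2\Delta_{2f}(G) \le \Delta_f(G) + 1$, which combined with the preceding two steps completes the proof.
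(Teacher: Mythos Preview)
Your proposal is correct and matches the paper's approach essentially step for step: the paper derives $a_f(G)\le 2pa_{2f}(G)$ from the preceding Observation, obtains the middle equality by applying Theorem~\ref{f-pseudoarboricity} to $2f$, and leaves the final inequality as a routine arithmetic check, which you have simply made explicit via the case split on $r$. There is nothing to add.
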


Note that $a_f(G) = a(G) = 2 pa(G)$ when $G$ only consists of an even number of parallel edges between two vertices, so this upper bound is sometimes tight, although it is still quite far from the lower bound $a_f(G) \ge \max \left\{\Delta_f(G), a(G)\right\}$.

\subsection{On linear arboricity} \label{on-linear-arboricity}

We now focus on the most studied case, $f=2$, of the $f$-coloring problem for forests. A degree-$2$ forest is known as a \textit{linear forest} (as it is a union of disjoint paths), and the degree-$2$ arboricity of a loopless multigraph $G$ is known as the \textit{linear arboricity} $la(G)$ of $G$, introduced by Harary \cite{Ha2}. We again observe the easy lower bound $la(G) \ge \lceil \Delta(G)/2 \rceil$. However, by Nash-Williams' above formula for arboricity, a simple graph $G$ can have arboricity $a(G)$ as high as $\left\lceil (\Delta(G)+1)/2 \right\rceil$, which is tight if $G$ is $\Delta(G)$-regular, so $la(G) \ge \left\lceil (\Delta(G)+1)/2 \right\rceil$ for some simple graphs $G$. The Linear Arboricity Conjecture of Akiyama, Exoo, and Harary \cite{AkExHa} states that this is the maximum possible value of the linear arboricity of a simple graph.

\begin{conj}[Linear Arboricity Conjecture]\label{linear-arboricity-conjecture}
For every simple graph $G$, we have $la(G) \le \left\lceil (\Delta(G)+1)/2 \right\rceil$.
\end{conj}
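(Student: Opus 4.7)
The plan is to attack the conjecture through the $(g,h)$-oriented chromatic index framework with $(g,h) = (1,1)$, since a $(1,1)$-orientable multigraph is exactly a vertex-disjoint union of paths and cycles, and a linear forest is exactly such a union with no cycles. Theorem~\ref{oriented-Goldberg-Seymour} then reduces the conjecture to producing an optimal $(1,1)$-oriented decomposition and then eliminating the cycles from each color class.

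In detail, $\Delta_{(1,1)}(G) = \lceil \Delta(G)/2 \rceil$, and for a simple graph $G$ the density parameter $\mathcal{W}_{(1,1)}(G) = \max_{S}\lceil e(S)/|S| \rceil$ satisfies $\mathcal{W}_{(1,1)}(G) \le \lceil \Delta(G)/2 \rceil$ by the handshaking inequality $2 e(S) \le \Delta(G) \cdot |S|$. So Theorem~\ref{oriented-Goldberg-Seymour} already furnishes an edge-coloring of $G$ into exactly $\lceil \Delta(G)/2 \rceil$ disjoint unions of paths and cycles. This differs from the target $\lceil (\Delta(G)+1)/2\rceil$ by at most one color: a slack of exactly one extra color when $\Delta(G)$ is even, and no slack when $\Delta(G)$ is odd.

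The remaining task is to remove the cycles. A natural approach is a Kempe-chain recoloring, where we pick one edge per cycle and try to swap it into a neighboring color class via an alternating path, preserving $(1,1)$-orientability throughout; when $\Delta(G)$ is even the single free color class can act as an ``overflow bin'' for cycle-breaking edges. A more structural alternative is to invoke Theorem~\ref{oriented-Goldberg-Seymour} with an asymmetric, adaptively chosen pair $(g,h)$ --- for example, setting $h(v) = 0$ on a hitting set of vertices meeting every candidate cycle --- so that each color class must contain a sink in every component and therefore cannot carry a cycle. Either strategy must include a careful verification that the validity conditions hold and that neither $\Delta_{(g,h)}(G)$ nor $\mathcal{W}_{(g,h)}(G)$ exceeds $\lceil (\Delta(G)+1)/2\rceil$.

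The decisive obstacle is the odd-$\Delta(G)$ case: the $(1,1)$-oriented chromatic index already meets the conjectured bound exactly, so there is no room to allocate an extra cycle-breaking color class, and any local recoloring must preserve the total color count while simultaneously avoiding the creation of new cycles in previously acyclic classes. This rigidity is the essential reason the conjecture has resisted elementary attacks, and I expect any complete proof along these lines to require either a global probabilistic or discharging argument or additional structural hypotheses on $G$, exactly as in the paper's restriction to $k$-degenerate multigraphs with $\Delta(G) \ge 4k-2$ in Corollary~\ref{linear-arboricity-corollary}.
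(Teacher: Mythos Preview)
The statement you are attempting is Conjecture~\ref{linear-arboricity-conjecture}, which the paper states as an \emph{open} conjecture; there is no proof of it in the paper to compare against. The paper only establishes the special case of $k$-degenerate multigraphs with $\Delta(G)\ge 4k-2$ (Theorem~\ref{linear-arboricity-theorem} and Corollary~\ref{linear-arboricity-corollary}), and you yourself acknowledge in your final paragraph that your outline does not close the gap in general. So what you have written is a heuristic discussion, not a proof.

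Your setup via $(g,h)=(1,1)$ is correct and is essentially the classical observation that $G$ decomposes into $\lceil\Delta(G)/2\rceil$ degree-$2$ subgraphs; this much is well known and appears in the paper (see the Claim inside the proof of Theorem~\ref{linear-arboricity-theorem}). The genuine gap is exactly where you locate it: converting ``unions of paths and cycles'' into linear forests without exceeding $\lceil(\Delta(G)+1)/2\rceil$ colors. Your ``overflow bin'' suggestion for even $\Delta(G)$ does not work as stated, since the cycle-breaking edges drawn from $\lceil\Delta(G)/2\rceil$ different color classes can all meet at a single vertex, giving the overflow class degree up to $\lceil\Delta(G)/2\rceil$ rather than at most~$2$. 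The Kempe-chain and adaptive-$(g,h)$ ideas are plausible directions but you have not carried out either, and indeed no elementary argument of this kind is known; the paper's own contribution is precisely to sidestep this obstacle by imposing the degeneracy hypothesis so that $pa(G)$ is small enough for Theorem~\ref{f-arboricity} to apply.
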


It is known to be NP-hard in general to determine the linear arboricity of a simple graph $G$ at least when $\Delta(G)$ is even \cite{Pe}. The Linear Arboricity Conjecture \ref{linear-arboricity-conjecture} has been proven for a few classes of simple graphs (see, e.g., \cite{AkExHa,Gu1}) as well as for some small values of $\Delta(G)$ (see, e.g., \cite{AkExHa,Gu3}). It has been verified to hold asymptotically as $\Delta(G) \rightarrow \infty$, with Lang and Postle \cite{LaPo} recently proving the best known asymptotic upper bound, $la(G) \le \Delta(G)/2 + O(\Delta(G)^{1/2} \log^4 \Delta(G))$. The best known general upper bound, due to Guldan \cite{Gu2}, is $la(G) \le \lceil 3\Delta(G)/5 \rceil$ if $\Delta(G)$ is even, and $la(G) \le \lceil (3\Delta(G)+2)/5 \rceil$ if $\Delta(G)$ is odd. There is also an analogous conjecture for directed graphs \cite{NaPe} that would imply Conjecture \ref{linear-arboricity-conjecture}. 

Recently, there has been interest in proving the Linear Arboricity Conjecture \ref{linear-arboricity-conjecture} for sparse graphs, commonly taken to be graphs of low degeneracy. A loopless multigraph $G$ is said to be \textit{$k$-degenerate} if every subgraph of $G$ has a vertex of degree at most $k$. The following observation will be useful for us.

\begin{obs} \label{observe}
For every $k$-degenerate loopless multigraph $G$, we have $pa(G) \le a(G) \le k$.
\end{obs}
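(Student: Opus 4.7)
The plan is to verify the two inequalities separately, both of which are standard.

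The inequality $pa(G) \le a(G)$ is immediate: every forest is vacuously a pseudoforest (it has no cycles at all, in particular at most one per component). Hence any edge-coloring of $G$ into $a(G)$ forests is also an edge-coloring of $G$ into $a(G)$ pseudoforests.

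For the bound $a(G) \le k$, I would proceed by induction on $|V(G)|$. The base case $|V(G)| = 1$ is trivial since $G$ has no edges. For the inductive step, $k$-degeneracy supplies a vertex $v$ with $d_G(v) \le k$; the subgraph $G - v$ is again $k$-degenerate (degeneracy is monotone under taking subgraphs), so by the induction hypothesis $E(G-v)$ decomposes into $k$ forests $F_1, \ldots, F_k$. Re-introduce $v$ together with its at most $k$ incident edges, assigning each such edge a distinct color from $\{1, \ldots, k\}$. In each color class $F_i$, the vertex $v$ is attached by at most one new edge, so it appears as a leaf and no cycle is created. Thus $E(G)$ decomposes into $k$ forests, giving $a(G) \le k$.

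An alternative route is to invoke Nash-Williams' formula directly: for any $S \subseteq V(G)$ with $|S| \ge 2$, iteratively peel off a vertex of minimum degree (of degree at most $k$) from the current induced subgraph of $G[S]$, accumulating at most $k$ edges per step until only one vertex remains; this gives $e(S) \le k(|S|-1)$, and hence $a(G) = \max_{|S|\ge 2} \lceil e(S)/(|S|-1) \rceil \le k$. There is no real obstacle; this is a standard textbook fact, and the only mild subtlety is noting that the argument does not use simplicity of $G$ (it works verbatim for loopless multigraphs, since parallel edges are counted with their multiplicities in both $d(v)$ and $e(S)$).
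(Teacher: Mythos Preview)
Your proof is correct and essentially the same as the paper's: both exploit the degeneracy ordering to assign the (at most $k$) edges at each newly processed vertex to distinct color classes, guaranteeing each class is a forest. The only cosmetic difference is that the paper packages the argument via an acyclic orientation with indegree at most $k$ (coloring the incoming arcs at each vertex distinctly), whereas you phrase it as an explicit induction on $|V(G)|$; your alternative Nash--Williams route is also valid but not used in the paper.
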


To see this, note that $k$-degeneracy is equivalent to $G$ having an acyclic orientation $D$ such that every vertex has indegree at most $k$, constructed by iteratively deleting a vertex $v$ of degree at most $k$ in $G$ and orienting the deleted edges toward $v$ in $D$. By coloring all the incoming arcs at each vertex of $D$ a different color, we obtain an edge-coloring of $G$ into $k$ forests. Thus $a(G) \le k$, and this proves the stated inequalities.

Now, Kainen \cite{Ka1} has proven that $la(G) \le \lceil (\Delta(G)+k-1)/2 \rceil$ if $G$ is a $k$-degenerate simple graph, which implies the Linear Arboricity Conjecture \ref{linear-arboricity-conjecture} when $k=2$. More recently, Basavaraju, Bishnu, Francis, and Pattanayak \cite{BaBiFrPa} proved the conjecture for $3$-degenerate simple graphs. They also proved that $la(G) = \lceil \Delta(G)/2 \rceil$ for all $2$-degenerate simple graphs $G$ when $\Delta(G) \ge 5$. Chen, Hao, and Yu \cite{ChHa} recently announced that they proved the Linear Arboricity Conjecture \ref{linear-arboricity-conjecture} for $k$-degenerate simple graphs $G$ when $\Delta(G) \ge 2k^2 - 2k$. Using Theorem \ref{f-arboricity} on the degree-$f$ arboricity, we will improve their bound to $\Delta(G) \ge 4k-2$. Moreover, our result will apply more generally to when $G$ is a loopless multigraph.

We comment that the linear arboricity of loopless multigraphs has not received as much attention as simple graphs. The main contribution for multigraphs has been one paper of A\"it-djafer \cite{Ai}, who generalized Conjecture \ref{linear-arboricity-conjecture} to $la(G) \le \lceil (\Delta(G)+\mu(G))/2 \rceil$ where $\mu(G)$ is the edge-multiplicity of $G$. She verified this for $\mu(G) \ge \Delta(G)-2$, as well as when $\Delta(G)$ is close to a power of 2 and $\mu(G)$ is close to $\Delta(G)/2$. Note again that by Nash-Williams' formula, the arboricity $a(G)$ of a loopless multigraph $G$ can be as high as $\left\lceil (\Delta(G)+\mu(G))/2 \right\rceil$, so this conjectured upper bound for $la(G)$ would be best possible in terms of $\Delta(G)$ and $\mu(G)$ alone. What we will observe is that if $G$ is a sufficiently sparse loopless multigraph, then we can improve this conjectured upper bound for linear arboricity to $la(G) \le \left\lceil (\Delta(G)+1)/2 \right\rceil$. In particular we will be able to deduce the Linear Arboricity Conjecture \ref{linear-arboricity-conjecture} for such multigraphs $G$. This sparsity phenomenon also occurs with the Goldberg-Seymour Conjecture \ref{Goldberg-Seymour-f} for $f$-colorings.

\begin{thm} \label{linear-arboricity-theorem}
For every loopless multigraph $G$ with $\Delta(G) \ge 4pa(G)-2$, we have 
\begin{align*}
    la(G) \le \left\lceil \frac{\Delta(G)+1}{2} \right\rceil.
\end{align*}
\end{thm}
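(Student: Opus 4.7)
The plan is to apply Theorem~\ref{f-arboricity} with $f = 2$, which yields
\[
la(G) \le 2\,pa_4(G) = \max\bigl\{2\lceil \Delta(G)/4 \rceil,\; 2\,pa(G)\bigr\}.
\]
The hypothesis $\Delta(G) \ge 4\,pa(G) - 2$ gives $2\,pa(G) \le (\Delta(G)+2)/2$, and since $2\,pa(G)$ is an integer this simplifies to $2\,pa(G) \le \lceil (\Delta(G)+1)/2 \rceil$, so the $2\,pa(G)$ term never exceeds the target. A short case check by the residue of $\Delta(G)$ modulo~$4$ shows that $2\lceil \Delta(G)/4 \rceil \le \lceil (\Delta(G)+1)/2 \rceil$ in the residues $0, 2, 3 \pmod 4$, but exceeds the target by exactly one when $\Delta(G) \equiv 1 \pmod 4$. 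Thus the claimed bound follows directly from Theorem~\ref{f-arboricity} whenever $\Delta(G) \not\equiv 1 \pmod 4$.

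To close the exceptional case $\Delta(G) \equiv 1 \pmod 4$, I would reduce to a maximum degree divisible by~$4$ by peeling off a matching. Specifically, I would find a matching $M \subseteq E(G)$ meeting every vertex of degree $\Delta(G)$; such a matching is furnished by a classical matching-covering fact for loopless multigraphs with odd maximum degree. Then $G' := G - M$ satisfies $\Delta(G') = \Delta(G) - 1 \equiv 0 \pmod 4$ and $pa(G') \le pa(G)$, so applying Theorem~\ref{f-arboricity} to $G'$ yields $la(G') \le (\Delta(G) - 1)/2$ (both terms in the max for $G'$ are at most this). Treating $M$ itself as one additional linear forest then yields
\[
la(G) \le la(G') + 1 \le \frac{\Delta(G) - 1}{2} + 1 = \lceil (\Delta(G) + 1)/2 \rceil,
\]
as desired.

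The main obstacle in this plan is justifying the matching $M$ in the exceptional residue: that when the maximum degree of a loopless multigraph is odd, there is always a matching incident to every vertex of maximum degree. I would invoke this as a known consequence of Vizing-type edge-coloring arguments, or else supply a short self-contained proof via the observation that in a proper edge-coloring of $G$ with slightly more than $\Delta(G)$ colors, every vertex of degree $\Delta(G)$ has at least one ``missing'' color, so a pigeonhole or parity argument can produce a color class covering every maximum-degree vertex.
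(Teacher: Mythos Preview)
Your overall strategy is exactly the paper's: apply Theorem~\ref{f-arboricity} with $f=2$, check residues of $\Delta(G)$ modulo $4$, and in the residue $1$ case peel off one linear forest to drop the maximum degree before reapplying the bound. The arithmetic you give is correct in every residue.

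The gap is in the peeling step. You assert that a loopless multigraph with odd maximum degree always has a \emph{matching} meeting every maximum-degree vertex, calling this a classical fact. It is not true for multigraphs. Take a central vertex $v$ joined to $a_1,a_2,a_3$, and for each $i$ a triangle $a_ib_ic_i$ with the edge $b_ic_i$ doubled. This multigraph is $3$-regular (so $\Delta$ is odd and every vertex is of maximum degree), but deleting $v$ leaves three odd components, so by Tutte's theorem there is no perfect matching; hence no matching covers all maximum-degree vertices. Your suggested justifications via Vizing-type colorings or a pigeonhole on missing colors cannot rescue this, since in that example no color class of any proper edge-coloring is a perfect matching. (Your counterexample-free cases with $\Delta\ge 4\,pa(G)-2$ happen to be sparser than this example, but you do not use the sparsity hypothesis in your matching argument, and it is not clear how you would.)

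The fix is painless and is what the paper does: peel off a \emph{linear forest} rather than a matching. Decompose $G$ into $\lceil \Delta(G)/2\rceil$ degree-$2$ subgraphs (this is the $(g,h)=(1,1)$ case of Lemma~\ref{coloring-lemma}, or equivalently an Euler/K\"onig argument); every vertex of degree $\Delta(G)$ has degree at least one in each such subgraph, and deleting one edge from each cycle of any one of them yields a linear forest $F$ still meeting every maximum-degree vertex. With $G'=G-E(F)$ one has $\Delta(G')\le \Delta(G)-1$, and your computation for the $\Delta\equiv 1\pmod 4$ case then goes through verbatim.
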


\begin{proof}
Applying Theorem \ref{f-arboricity} with $f = 2$ shows that every loopless multigraph $G$ satisfies
\begin{align*}
    la(G) \le 2pa_4(G) = \max \left\{ 2 \left\lceil \frac{\Delta(G)}{4} \right\rceil, 2pa(G) \right\}.
\end{align*}
In particular, when $\Delta(G) \ge 4pa(G)-3$, we have
\begin{align*}
    la(G) \le 
    \begin{cases}
    \left\lceil \Delta(G)/2 \right\rceil &\mbox{if } \Delta(G) \equiv 0 \text{ or } 3 \pmod 4, \\
    \left\lceil \Delta(G)/2 \right\rceil + 1 &\mbox{if } \Delta(G) \equiv 1 \text{ or } 2 \pmod 4.
    \end{cases}
\end{align*}
When $\Delta(G) \equiv 2 \pmod 4$, we have $\left\lceil \Delta(G)/2 \right\rceil + 1 = \left\lceil (\Delta(G)+1)/2 \right\rceil$, so the proof is complete in all cases except $\Delta(G) \equiv 1 \pmod 4$. For this final case, we use a trick due to Guldan \cite{Gu2} of deleting a linear forest from $G$ before applying our theorem. This is done via the following simple claim. (Note that Guldan stated this claim only for regular simple graphs and gave no proof.)

\begin{claim}
Every loopless multigraph $G$ has a linear forest $F$ such that every vertex of degree $\Delta(G)$ in $G$ has degree at least one in $F$.
\end{claim}

\begin{proof}
Applying the same logic as Lemma \ref{coloring-lemma}, the multigraph $G$ can be edge-colored into $\lceil \Delta(G)/2 \rceil$ degree-$2$ subgraphs, which are each unions of disjoint paths and cycles. Letting $F'$ be any one of these degree-$2$ subgraphs, we see that every vertex $v$ of degree $\Delta(G)$ in $G$ must have degree at least one in $F'$. Then letting $F$ be obtained from $F'$ by removing one edge from each cycle in $F'$, we obtain our desired linear forest $F$ in $G$.
\end{proof}

We may now complete the proof. Let $G$ be a multigraph with $\Delta(G) \equiv 1 \pmod 4$, let $F$ be a linear forest from the above claim, and let $G' = G - E(F)$. Then $G = G' \cup F$ and $\Delta(G') \le \Delta(G)-1$, so we have 
\begin{align*}
    la(G) &\le la(G') + 1 \le 2pa_4(G') + 1 = \max \left\{ 2\left\lceil \frac{\Delta(G')}{4} \right\rceil + 1, 2pa(G') + 1\right\} \\
    &\le \max \left\{ 2 \cdot \frac{\Delta(G)-1}{4} + 1, 2pa(G) + 1 \right\} = \max \left\{ \frac{\Delta(G)+1}{2}, 2pa(G) + 1 \right\}.
\end{align*}
When $\Delta(G) \ge 4pa(G) - 2$, this shows that $la(G) \le \lceil (\Delta(G)+1)/2 \rceil$ as required.
\end{proof}

\begin{cor} \label{linear-arboricity-corollary}
The Linear Arboricity Conjecture \ref{linear-arboricity-conjecture} holds for all $k$-degenerate loopless multigraphs $G$ with $\Delta(G) \ge 4k-2$.
\end{cor}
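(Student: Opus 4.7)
The plan is to deduce this corollary almost immediately from Theorem~\ref{linear-arboricity-theorem} by using the degeneracy hypothesis to bound the pseudoarboricity. Specifically, Observation~\ref{observe} gives $pa(G) \le a(G) \le k$ for any $k$-degenerate loopless multigraph $G$. Therefore the hypothesis $\Delta(G) \ge 4k - 2$ implies $\Delta(G) \ge 4\, pa(G) - 2$, so Theorem~\ref{linear-arboricity-theorem} applies and yields
\begin{align*}
    la(G) \le \left\lceil \frac{\Delta(G)+1}{2} \right\rceil,
\end{align*}
which is exactly the bound in Conjecture~\ref{linear-arboricity-conjecture}.

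Since every step of the argument is already recorded in the paper (Observation~\ref{observe} encodes the standard acyclic-orientation proof that a $k$-degenerate loopless multigraph decomposes into $k$ forests, and Theorem~\ref{linear-arboricity-theorem} already did the serious casework on $\Delta(G)\bmod 4$ together with the Guldan linear-forest extraction trick), there is no real obstacle here; the only thing to check is that the inequality $4k - 2 \ge 4\, pa(G) - 2$ is strong enough to feed into Theorem~\ref{linear-arboricity-theorem}, which it clearly is. Thus the proof amounts to two lines quoting these two earlier results.
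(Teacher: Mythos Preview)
Your proposal is correct and matches the paper's own proof, which simply says the corollary is immediate from Observation~\ref{observe} and Theorem~\ref{linear-arboricity-theorem}. You have spelled out exactly the intended two-line deduction: $pa(G)\le k$ from degeneracy, hence $\Delta(G)\ge 4k-2\ge 4\,pa(G)-2$, so Theorem~\ref{linear-arboricity-theorem} applies.
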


\begin{proof}
This is immediate from Observation \ref{observe} and Theorem \ref{linear-arboricity-theorem}.
\end{proof}

Finally, we note that Theorem \ref{f-arboricity} on the degree-$f$ arboricity also improves an old bound of Caro and Roditty \cite{CaRo}. Generalizing the linear arboricity upper bound of Kainen \cite{Ka1} stated above, Caro and Roditty proved that every $k$-degenerate simple graph $G$ satisfies $a_t(G) \le \left\lceil \frac{\Delta(G) + (t-1)k - 1}{t} \right\rceil$, where $f=t \ge 2$ is taken to be constant. Applying our Theorem \ref{f-arboricity} with $f=t$, we obtain the following upper bound for $a_t(G)$ which improves Caro and Roditty's bound at least when $\Delta(G) \ge (k+1)t+2$. Our result also applies more generally to when $G$ is a loopless multigraph.

\begin{cor}\label{t-arboricity}
For every loopless multigraph $G$, we have
\begin{align*}
    a_t(G) \le \max \left\{ \left\lceil \frac{\Delta(G)}{t} \right\rceil + 1, 2pa(G) \right\}.
\end{align*}
\end{cor}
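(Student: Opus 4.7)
The plan is to invoke Theorem \ref{f-arboricity} directly with the constant function $f \equiv t$. Since $t \ge 2$, the constant $f \equiv t$ satisfies the hypothesis $f : V(G) \to \mathbb{N} \setminus \{0,1\}$ of Theorem \ref{f-arboricity}, so the theorem applies and gives
\[
    a_t(G) = a_f(G) \le \max\left\{\Delta_f(G)+1,\; 2pa(G)\right\}.
\]
So the only real task is to identify $\Delta_f(G)$ in this constant-$f$ setting.

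The observation to make is that for $f \equiv t$ constant, the weighted maximum degree $\Delta_f(G) = \max_{v \in V(G)} \lceil d(v)/f(v) \rceil$ collapses to the ordinary quantity $\lceil \Delta(G)/t \rceil$, because $x \mapsto \lceil x/t \rceil$ is monotone nondecreasing in $x$ and is therefore maximized precisely at the vertex $v$ realizing $\Delta(G)$. Substituting $\Delta_f(G) = \lceil \Delta(G)/t \rceil$ into the bound above yields
\[
    a_t(G) \le \max\left\{ \left\lceil \frac{\Delta(G)}{t} \right\rceil + 1,\; 2pa(G) \right\},
\]
which is exactly the claim. There is no real obstacle here: once Theorem \ref{f-arboricity} is in hand the corollary is a one-line substitution. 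The subsequent comparison with the Caro--Roditty bound for $k$-degenerate simple graphs then follows by combining this with Observation \ref{observe}, which gives $pa(G) \le k$, so that $2pa(G) \le 2k$ and the maximum on the right is dominated by the $\lceil \Delta(G)/t \rceil + 1$ term as soon as $\Delta(G) \ge (k+1)t+2$, as noted in the paragraph preceding the corollary.
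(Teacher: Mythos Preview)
Your proof is correct and matches the paper's approach exactly: the corollary is obtained simply by specializing Theorem~\ref{f-arboricity} to the constant function $f \equiv t$, whereupon $\Delta_f(G) = \lceil \Delta(G)/t \rceil$. The paper does not even write out a separate proof, merely stating that the bound follows by ``applying our Theorem~\ref{f-arboricity} with $f=t$.''
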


\subsection{On the f-chromatic index} \label{on-f-chromatic-index}
As a last application, we indicate elementary results on the $f$-chromatic index $\chi_f'(G)$ that we can immediately obtain from Theorem \ref{oriented-Goldberg-Seymour}. First we observe that we can easily derive the following exact result on the $f$-chromatic index due to Hakimi and Kariv \cite{HaKa}.

\begin{cor} \label{bipartite-even}
Let $G$ be a multigraph and let $f : V(G) \rightarrow \mathbb{N} \backslash \{0\}$ be a function.
\begin{enumerate}[label={(\roman*)}]
    \item If $G$ is bipartite, then $\chi_f'(G) = \Delta_f(G)$.
    \item If $f(v)$ is even for all $v \in V(G)$, then $\chi_f'(G) = \Delta_f(G)$.
\end{enumerate}
\end{cor}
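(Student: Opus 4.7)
The overall plan is to reduce each case to Theorem \ref{oriented-Goldberg-Seymour} by choosing a valid pair $(g,h)$ with $g+h=f$ for which the $(g,h)$-orientable subgraphs of $G$ are exactly the degree-$f$ subgraphs; this identification forces $\chi_f'(G) = \chi'_{(g,h)}(G)$. The remaining task in each part is to verify that the density parameter $\mathcal{W}_{(g,h)}(G)$ does not exceed the degree parameter $\Delta_{(g,h)}(G) = \Delta_f(G)$, so that the formula from Theorem \ref{oriented-Goldberg-Seymour} collapses to $\chi_f'(G) = \Delta_f(G)$.

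For part (i), if $G$ is bipartite with parts $A$ and $B$, I would set $(g(v), h(v)) = (0, f(v))$ for $v \in A$ and $(g(v), h(v)) = (f(v), 0)$ for $v \in B$. Since every edge of $G$ runs between $A$ and $B$, any $(g,h)$-orientation of a subgraph $H$ must orient all edges from $A$ to $B$, and the indegree/outdegree constraints then collapse to $d_H(v) \le f(v)$ on both sides; conversely, the canonical $A$-to-$B$ orientation of any degree-$f$ subgraph is a $(g,h)$-orientation. The three validity conditions for $(g,h)$ follow from $f \ge 1$. For the density bound, fix $S$ with $f(S \cap A), f(S \cap B) \ge 1$; since every edge of $G[S]$ has one endpoint in each part, $e(S) \le \sum_{v \in S \cap A} d(v) \le \Delta_f(G) \cdot f(S \cap A)$, and symmetrically with $A$ and $B$ swapped, so dividing by $\min\{f(S \cap A), f(S \cap B)\}$ and taking ceilings yields $\mathcal{W}_{(g,h)}(G) \le \Delta_f(G)$.

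For part (ii), with $f(v)$ even everywhere, I would set $g = h = f/2$. Every $(g,h)$-orientable subgraph is automatically degree-$f$ since $g+h = f$. Conversely, every degree-$f$ subgraph $H$ admits a $(g,h)$-orientation by a standard Eulerian-orientation trick: add an auxiliary vertex joined to every odd-degree vertex of $H$ to form an Eulerian supergraph, take an Eulerian orientation, and delete the auxiliary vertex; each $v \in V(H)$ then satisfies $d^-(v), d^+(v) \le \lceil d_H(v)/2 \rceil \le f(v)/2$. Validity of $(g,h)$ is immediate from $f \ge 2$. For the density bound, $f(S)$ is always even, so $\min\{g(S), h(S)\} = f(S)/2$; the handshake inequality gives $2e(S) \le \sum_{v \in S} d(v) \le \Delta_f(G) \cdot f(S)$, so $\lceil 2e(S)/f(S) \rceil \le \Delta_f(G)$ as required.

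The only step requiring any real work is the Eulerian-orientation lemma used in part (ii), which is entirely classical. Everything else is bookkeeping: choosing $(g,h)$ to encode the $f$-coloring constraint on the nose, checking the three validity conditions, and observing that the structural hypothesis (bipartiteness or even $f$) already forces the density parameter to be dominated by the degree parameter.
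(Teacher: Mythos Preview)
Your proposal is correct and follows essentially the same approach as the paper: the same choices of $(g,h)$ in each part, followed by the check that $\mathcal{W}_{(g,h)}(G) \le \Delta_{(g,h)}(G) = \Delta_f(G)$. The Eulerian-orientation step in part (ii) is harmless but unnecessary, since you only need the inclusion $\{(g,h)\text{-orientable subgraphs}\} \subseteq \{\text{degree-}f\text{ subgraphs}\}$ to get $\chi_f'(G) \le \chi'_{(g,h)}(G)$, and the matching lower bound $\chi_f'(G) \ge \Delta_f(G)$ is already in hand.
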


For a sketch, to prove (i), if $X$ and $Y$ are the two parts of $G$, then we apply Theorem \ref{oriented-Goldberg-Seymour} with the valid pair of functions $(g,h)$ defined by $(g(v), h(v)) = (0, f(v))$ if $v \in X$, and $(g(v), h(v)) = (f(v), 0)$ if $v \in Y$. To prove (ii), we apply Theorem \ref{oriented-Goldberg-Seymour} with the valid pair $(g,h)$ defined by $(g(v), h(v)) = (f(v)/2, f(v)/2)$ for all $v \in V(G)$. In both cases, it is easy to check that the weighted maximum degree $\Delta_{(g,h)}(G) = \Delta_f(G)$ is at least the weighted maximum density $\mathcal{W}_{(g,h)}(G) = \mathcal{W}_f(G)$, so that Corollary \ref{bipartite-even} holds. Although Hakimi and Kariv's proof of Corollary \ref{bipartite-even} is not too different from this approach, our proof sketch provides a more unified view of this result.

On the other hand, for general multigraphs $G$ and general functions $f : V(G) \rightarrow \mathbb{N} \backslash \{0\}$ the best kind of result we can hope for is the Goldberg-Seymour Conjecture \ref{Goldberg-Seymour-f} for $f$-colorings,
\begin{align*}
    \chi_f'(G) \le \max \left\{\Delta_f(G) + 1, \max_{S \subseteq V(G), |S| \ge 2} \left\lceil \frac{e(S)}{\lfloor f(S)/2 \rfloor} \right\rceil \right\}.
\end{align*}
If we assume that $f(v) \ge 2$ for all $v \in V(G)$, then we may obtain an approximation of the conjecture by taking $g(v) = \lfloor f(v)/2 \rfloor$ and $h(v) = \lceil f(v)/2 \rceil$ for all $v \in V(G)$. Every $(g,h)$-orientable subgraph is indeed a degree-$f$ subgraph, so Theorem \ref{oriented-Goldberg-Seymour} implies the following.

\begin{thm} \label{odd-cor}
For every multigraph $G$ and function $f : V(G) \rightarrow \mathbb{N} \backslash \{0,1\}$, we have
\begin{align*}
    \chi_f'(G) \le \max \left\{\Delta_f(G), \max_{S \subseteq V(G), |S| \ge 2} \left\lceil \frac{e(S)}{\sum_{v \in S} \lfloor f(v)/2 \rfloor} \right\rceil \right\}.
\end{align*}
\end{thm}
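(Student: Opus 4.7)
The plan is to deduce Theorem \ref{odd-cor} as a direct instance of Theorem \ref{oriented-Goldberg-Seymour} for a carefully chosen pair $(g,h)$. Specifically, I would set $g(v) = \lfloor f(v)/2 \rfloor$ and $h(v) = \lceil f(v)/2 \rceil$ for every $v \in V(G)$. The hypothesis $f(v) \ge 2$ immediately gives $g(v) \ge 1$ and $h(v) \ge 1$, so all three validity conditions for $(g,h)$ are trivially satisfied (each sum of two values is at least $2$), and in particular $(g,h)$ is a valid pair of functions for $G$.

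Next, I would observe that any $(g,h)$-orientable subgraph $H \subseteq G$ is a degree-$f$ subgraph of $G$: for every vertex $v$, its degree in $H$ is at most $g(v) + h(v) = \lfloor f(v)/2 \rfloor + \lceil f(v)/2 \rceil = f(v)$. Consequently, every $(g,h)$-oriented-coloring of $G$ is an $f$-coloring of $G$, which yields the key inequality $\chi_f'(G) \le \chi'_{(g,h)}(G)$.

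I would then apply Theorem \ref{oriented-Goldberg-Seymour} to bound $\chi'_{(g,h)}(G)$ by $\max\{\Delta_{(g,h)}(G), \mathcal{W}_{(g,h)}(G)\}$ and simplify each parameter. For the degree parameter, $g(v)+h(v) = f(v)$ gives $\Delta_{(g,h)}(G) = \Delta_f(G)$ on the nose. For the density parameter, since $\lfloor f(v)/2 \rfloor \le \lceil f(v)/2 \rceil$ for every $v$, we have $g(S) \le h(S)$ for every $S$, so $\min\{g(S), h(S)\} = g(S) = \sum_{v \in S} \lfloor f(v)/2 \rfloor$, which matches the density appearing in the target inequality. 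The restriction $|S| \ge 2$ in the statement is compatible with the constraint $g(S), h(S) \ge 1$ in $\mathcal{W}_{(g,h)}(G)$ (the latter is automatic since $g(v), h(v) \ge 1$ whenever $S$ is nonempty), so nothing is lost.

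Because the author essentially hands us the substitution, there is no substantive obstacle; the only mildly delicate point is verifying that $\min\{g(S),h(S)\} = g(S)$, which I would record as a one-line observation. I would present the whole argument in a single short paragraph after the theorem statement, making explicit the computation of $\Delta_{(g,h)}$ and $\mathcal{W}_{(g,h)}$ and citing Theorem \ref{oriented-Goldberg-Seymour} for the bound.
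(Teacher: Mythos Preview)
Your proposal is correct and matches the paper's own argument exactly: the paper introduces precisely the pair $g(v)=\lfloor f(v)/2\rfloor$, $h(v)=\lceil f(v)/2\rceil$, notes that every $(g,h)$-orientable subgraph is a degree-$f$ subgraph, and then invokes Theorem~\ref{oriented-Goldberg-Seymour}. Your expanded verification of validity, of $\Delta_{(g,h)}(G)=\Delta_f(G)$, and of $\min\{g(S),h(S)\}=g(S)$ is just a spelled-out version of what the paper leaves implicit; the one point to phrase more carefully is the passage from the range $g(S),h(S)\ge 1$ (i.e.\ $|S|\ge 1$) in $\mathcal{W}_{(g,h)}(G)$ to $|S|\ge 2$ in the statement, which is harmless precisely because singletons contribute $e(\{v\})=0$ in the loopless setting that underlies the Goldberg--Seymour framework.
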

This result can be viewed as an ``averaged" version of Hakimi and Kariv's Theorem 2 in \cite{HaKa}, and it is an improvement in most cases. As the values of $f$ get larger, this approximation of Conjecture \ref{Goldberg-Seymour-f} gets better, independently of the size of the multigraph: If $f(v) \ge k$ for all $v \in V(G)$, then the factor of approximation is about $k/(k-1)$. Of course, it does not come close to approximations achieved using the more precise tools of alternating trails and Tashkinov trees (see \cite{NaNiSa, St}).

\section{List coloring} \label{list}
Given a multigraph $G$, a \textit{list assignment} $L$ for $E(G)$ is an assignment of a list $L(e)$ of distinct colors to each edge $e$ of $G$. Given a list assignment $L$ for $E(G)$, an \textit{$L$-coloring} of $G$ is an edge-coloring such that the color of each edge $e$ lies in $L(e)$. A \textit{proper $L$-coloring} of $G$ is an $L$-coloring such that each color class is a matching. The \textit{list chromatic index} $\chi_\ell'(G)$ of a loopless multigraph $G$ is the minimum $k$ such that for every list assignment $L$ with $|L(e)| \ge k$ for all $e \in E(G)$, there exists a proper $L$-coloring of $G$. Notice that $\chi_\ell'(G) \ge \chi'(G)$ because there cannot exist a proper $L$-coloring of $G$ for the list assignment $L(e) = \{1, \ldots, \chi'(G)-1\}$, $e \in E(G)$. The infamous List Coloring Conjecture asserts that this inequality should in fact be an equality.

\begin{conj}[List Coloring Conjecture] \label{list-coloring-conjecture}
For every loopless multigraph $G$, we have $\chi_\ell'(G) = \chi'(G)$.
\end{conj}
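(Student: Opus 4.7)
The statement is the celebrated List Coloring Conjecture, which is famously open, so I do not expect a full proof. My plan is to see how far the $(g,h)$-oriented framework developed in this paper can be pushed. The most tempting attempt is to find a valid pair $(g,h)$ such that the $(g,h)$-orientable subgraphs of $G$ are exactly the matchings of $G$, so that the forthcoming Theorem \ref{list-version} would immediately give
\[
\chi'_\ell(G) = \chi'_{(g,h),\ell}(G) = \chi'_{(g,h)}(G) = \chi'(G).
\]
Unfortunately no such $(g,h)$ exists: for any valid pair with $g(v), h(v) \in \{0,1\}$, the $(g,h)$-orientable subgraphs include all $(1,1)$-orientable ones, i.e.\ disjoint unions of directed paths and cycles, not just matchings. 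The matching class is not singled out on the nose by any oriented-coloring parameter of the kind studied here.

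A more hands-on approach is to mimic the auxiliary bipartite construction of Lemma \ref{coloring-lemma}: start from a proper $k$-edge-coloring witnessing $\chi'(G)=k$, build a bipartite multigraph $H$ whose proper $k$-edge-colorings encode proper $k$-edge-colorings of $G$, and then apply Galvin's theorem on $H$ to promote the ordinary coloring to a list coloring. The obstacle is that in Lemma \ref{coloring-lemma} the bipartite structure of $H$ came from orienting each arc of a $(kg,kh)$-orientation into a tail in $X$ and a head in $Y$; matchings in a non-bipartite graph have no analogous canonical two-sided vertex splitting, so the natural lift of edges of $G$ to edges of a bipartite $H$ does not exist in a way that preserves the matching condition.

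The main obstacle, then, is that Galvin's theorem genuinely exploits bipartiteness, and the $(g,h)$ framework only reduces to the bipartite case because an orientation splits each vertex into an in-side and an out-side. Proper edge-colorings of arbitrary multigraphs offer no such splitting. Consequently what this framework actually delivers is the $(g,h)$-oriented analogue Theorem \ref{list-version}, together with list versions of every corollary whose extremal class is itself characterized by $(g,h)$-orientability (pseudoforests, degree-$f$ pseudoforests, and the $f$-coloring approximation Theorem \ref{odd-cor}). Bridging from there to the full Conjecture \ref{list-coloring-conjecture} appears to require a genuinely new ingredient beyond what is developed in this paper.
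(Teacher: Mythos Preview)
Your assessment is correct: the statement is presented in the paper as an open conjecture, not as a theorem, and the paper offers no proof of it. The paper merely cites Galvin's bipartite case (Theorem~\ref{Galvin}) and then proves the $(g,h)$-oriented analogue (Theorem~\ref{list-version}); it makes no claim to settle Conjecture~\ref{list-coloring-conjecture} itself. So there is no ``paper's own proof'' to compare against, and your decision not to attempt a full proof is the right one.

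Your analysis of why the paper's machinery does not reach the full conjecture is accurate and matches the paper's own implicit boundaries. In particular, your observation that no valid pair $(g,h)$ singles out matchings as exactly the $(g,h)$-orientable subgraphs is correct (any valid pair with $g+h\ge 1$ everywhere admits at least all single edges plus, for vertices with $g(v),h(v)\ge 1$, paths and cycles through them), and your diagnosis that the bipartite auxiliary graph $H$ in Lemma~\ref{coloring-lemma} depends essentially on the in/out splitting induced by an orientation is exactly the point. The paper itself draws the same line: it notes that Theorem~\ref{list-version} recovers Galvin's theorem as a special case and yields list versions of the pseudoarboricity and $f$-coloring corollaries, but it does not suggest any route from there to Conjecture~\ref{list-coloring-conjecture} for general multigraphs.
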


This conjecture was first stated explicitly by Bollob\'as and Harris \cite{BoHa} but was also suggested by various authors before them. It has been verified for only a few classes of multigraphs, although Kahn \cite{Ka} has confirmed that it holds asymptotically as $\Delta(G) \rightarrow \infty$ at least when $G$ is a simple graph. Our interest in the conjecture is the well-known special case when $G$ is a bipartite multigraph, which was proven in surprising fashion by Galvin \cite{Ga}. 

\begin{thm}[Galvin] \label{Galvin}
For every bipartite multigraph $G$, we have $\chi_\ell'(G) = \chi'(G)$.
\end{thm}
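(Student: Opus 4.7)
The plan is to carry out the classical kernel-method proof. Recall the Bondy--Boppana--Siegel lemma: if $D$ is an orientation of a graph $H$ such that every induced subdigraph of $D$ has a kernel (an independent set $K$ such that every vertex outside $K$ has an out-neighbor in $K$), and if $L$ is a list assignment on $V(H)$ with $|L(v)| > d_D^+(v)$ for every vertex $v$, then $H$ admits a proper $L$-coloring. I would apply this to the line graph of $G$ with a clever orientation derived from K\"onig's Theorem \ref{Konig}.

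By K\"onig's theorem, $\chi'(G) = \Delta(G) =: k$, so fix a proper $k$-edge-coloring $c : E(G) \to \{1, \ldots, k\}$, and let $X, Y$ be the bipartition classes of $G$. On the line graph of $G$, with vertex set $E(G)$, I would define an orientation $D$ by orienting, between two distinct edges $e, f \in E(G)$ sharing an endpoint $v$, an arc from $e$ to $f$ exactly when either $v \in X$ and $c(e) < c(f)$, or $v \in Y$ and $c(e) > c(f)$. If $e = xy$ has color $i$, then at $x \in X$ it has at most $k - i$ out-neighbors (incident edges of larger color), and at $y \in Y$ at most $i - 1$ out-neighbors, so $d_D^+(e) \le k - 1$. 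Hence any list assignment with $|L(e)| \ge k$ satisfies the hypothesis $|L(e)| > d_D^+(e)$ of the kernel lemma.

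The only real content left is the kernel existence claim: for every $F \subseteq E(G)$, the induced subdigraph $D[F]$ has a kernel. Here I would invoke the Gale--Shapley stable matching theorem. Viewing $F$ as a bipartite multigraph with parts $X$ and $Y$, I would equip each $x \in X$ with the preference ordering that ranks its incident edges in $F$ by decreasing color, and each $y \in Y$ with the ordering by increasing color. A stable matching $M \subseteq F$ is then a kernel of $D[F]$: for any $e = xy \in F \setminus M$, stability forbids $(x,y)$ from being a blocking pair, which forces either $M(x)$ to be defined with $c(M(x)) > c(e)$, or $M(y)$ to be defined with $c(M(y)) < c(e)$. In either case $M$ contains an out-neighbor of $e$ in $D$. (The case where both $x$ and $y$ are unmatched would itself violate stability.)

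Once the kernel lemma is in hand, the standard kernel-method induction completes the proof: pick any color $\alpha$ that appears in some list, let $F_\alpha = \{e \in E(G) : \alpha \in L(e)\}$, color a kernel of $D[F_\alpha]$ with $\alpha$, delete those edges, and remove $\alpha$ from the remaining lists. The invariant $|L(e)| > d^+(e)$ is preserved, because every surviving edge in $F_\alpha$ loses both a color from its list and an out-neighbor, while every edge outside $F_\alpha$ has its list unchanged and its out-degree nonincreasing. Iterating exhausts all edges. The main obstacle is recognizing the translation between kernels in $D[F]$ and stable matchings in $F$; the rest is essentially bookkeeping.
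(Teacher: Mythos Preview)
Your proof is correct and is essentially Galvin's original argument via the kernel method and stable matchings. However, the paper does not provide its own proof of this theorem: it is stated as a cited result of Galvin \cite{Ga} and used as a black box in the proof of Theorem~\ref{list-version}. So there is no proof in the paper to compare against; you have simply supplied (accurately) the classical proof that the paper omits.
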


We can similarly define the list coloring analogue $\chi_{(g,h),\ell}'(G)$ of the $(g,h)$-oriented chromatic index $\chi_{(g,h)}'(G)$, namely as the minimum $k$ such that for every list assignment $L$ for $E(G)$ with $|L(e)| \ge k$ for all $e \in E(G)$, there exists an $L$-coloring of $G$ where every color class is a $(g,h)$-orientable subgraph of $G$. Using Lemma \ref{coloring-lemma} and Galvin's Theorem \ref{Galvin}, we can immediately prove the analogue of the List Coloring Conjecture \ref{list-coloring-conjecture} for $(g,h)$-oriented-colorings.

\begin{restatable}{thm}{listversion}\label{list-version}
For every multigraph $G$ and valid pair of functions $(g,h)$, we have
\begin{align*}
    \chi'_{(g,h),\ell}(G) = \chi'_{(g,h)}(G).
\end{align*}
\end{restatable}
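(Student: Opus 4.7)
The lower bound $\chi'_{(g,h),\ell}(G) \ge \chi'_{(g,h)}(G)$ is free: for $k = \chi'_{(g,h),\ell}(G)$, the constant list assignment $L(e) = \{1,\ldots,k\}$ admits an $L$-coloring whose color classes are $(g,h)$-orientable, and that is a $(g,h)$-oriented-coloring using at most $k$ colors. So the work is in the reverse inequality.

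For the reverse inequality, the plan is to rerun the proof of Lemma \ref{coloring-lemma} verbatim, but invoke Galvin's Theorem \ref{Galvin} in place of K\"onig's Theorem \ref{Konig}. Set $k = \chi'_{(g,h)}(G)$. By Lemma \ref{coloring-lemma}, $G$ admits a $(kg, kh)$-orientation $D$. Build the auxiliary bipartite multigraph $H$ exactly as in the proof of Lemma \ref{coloring-lemma}: each vertex $v$ of $G$ contributes $\min\{\lceil d_G(v)/k \rceil, h(v)\}$ copies to $X$ and $\min\{\lceil d_G(v)/k \rceil, g(v)\}$ copies to $Y$, and each arc of $D$ becomes a single edge of $H$ between appropriately chosen copies, distributed so that $\Delta(H) \le k$. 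In particular, this construction furnishes a bijection $\phi : E(H) \to E(G)$ (each edge of $H$ records a unique arc of $D$, which in turn is a unique edge of the multigraph $G$).

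Now, given any list assignment $L$ for $E(G)$ with $|L(e)| \ge k$ for all $e$, pull it back along $\phi$ to a list assignment $L_H$ on $E(H)$ with $|L_H(e_H)| \ge k$ for all $e_H$. Since $H$ is bipartite with $\Delta(H) \le k$, K\"onig's Theorem \ref{Konig} gives $\chi'(H) \le k$, and then Galvin's Theorem \ref{Galvin} yields $\chi'_\ell(H) \le k$; hence $H$ admits a proper $L_H$-coloring. Transporting this coloring back through $\phi$ gives an $L$-coloring of $G$. Merging all copies of each vertex $v$ of $D$ back into $v$, within any single color class each copy in $X$ receives at most one edge and each copy in $Y$ receives at most one edge (the matching property); summing over copies shows that $v$ receives at most $h(v)$ tails and at most $g(v)$ heads of arcs of $D$ in that color. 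Thus each color class is a $(g,h)$-orientable subgraph of $G$ (with $D$ itself furnishing the required orientation), and we conclude $\chi'_{(g,h),\ell}(G) \le k$.

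The proof has essentially no obstacle: the only thing to verify carefully is the bookkeeping that edges of $H$ are in one-to-one correspondence with edges of $G$, so that the list assignment transfers cleanly, and that the matching property in $H$ translates back into the indegree/outdegree bounds at each vertex of $D$. Both are already implicit in the proof of Lemma \ref{coloring-lemma}, so the entire argument is a short reduction to Galvin's theorem.
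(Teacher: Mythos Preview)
Your proof is correct and follows essentially the same approach as the paper: invoke Lemma \ref{coloring-lemma} to obtain a $(kg,kh)$-orientation and the auxiliary bipartite multigraph $H$ with $\Delta(H)\le k$, transfer the list assignment to $H$ via the natural bijection $E(H)\leftrightarrow E(G)$, and replace K\"onig's Theorem by Galvin's Theorem \ref{Galvin} to produce the required $L$-coloring. The paper's version is terser (it omits the explicit lower bound and the bookkeeping about $\phi$), but the argument is the same.
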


\begin{proof}
Suppose that $\chi'_{(g,h)}(G) = k \ge 1$. Let $L$ be any list assignment for $E(G)$ with $|L(e)| \ge k$ for all $e \in E(G)$. By Lemma \ref{coloring-lemma}, $G$ has a $(kg, kh)$-orientation $D$. Let $H$ be the same auxiliary bipartite multigraph associated to $D$ as in the proof of Lemma \ref{coloring-lemma}, and assign the lists of $L$ to the corresponding edges in $H$. By construction, $H$ has maximum degree at most $k$, so by Galvin's Theorem \ref{Galvin}, $H$ has a proper $L$-coloring. Merging copies of each vertex of $D$ back to a single vertex, this proper $L$-coloring of $H$ gives an $L$-coloring of $G$ into $(g,h)$-orientable subgraphs, as required.
\end{proof}

We observe that Theorem \ref{list-version} implies list coloring results for some of the other edge-coloring parameters we studied above. Let $pa_{f,\ell}(G)$ denote the list coloring analogue of the degree-$f$ pseudoarboricity $pa_f(G)$. Then Theorem \ref{list-version} together with Observation \ref{observe0} implies that $pa_{f,\ell}(G) = pa_f(G)$ if $f(v) \ge 2$ for all $v \in V(G)$. Next, let $\chi_{f,\ell}'(G)$ denote the list coloring analogue of the $f$-chromatic index $\chi_f'(G)$. Then Theorem \ref{list-version} and our comments on Corollary \ref{bipartite-even} imply that $\chi_{f,\ell}'(G) = \chi_f'(G)$ if $G$ is bipartite or if $f$ takes only even values, since we saw in these cases that $\chi_f'(G) = \chi_{(g,h)}'(G)$ for an appropriate choice of valid functions $(g,h)$. In particular, Galvin's Theorem \ref{Galvin} is itself a special case of Theorem \ref{list-version}. Also note that the upper bound on $\chi_{f}'(G)$ in Corollary \ref{odd-cor} applies as well to $\chi_{f,\ell}'(G)$. The list coloring analogue of the degree-$f$ arboricity $a_f(G)$ appears harder to study by our approach, so we only remark that a result of Seymour \cite{Se1} states that list arboricity equals arboricity (the case $f = \infty$), and that the above mentioned asymptotic upper bound for linear arboricity by Lang and Postle \cite{LaPo} was in fact proven for list linear arboricity (the case $f=2$).

\section*{Acknowledgments}
I would like to thank Penny Haxell for the much helpful editing of this work, encouraging me to finish writing it, and being the source of many ideas that led to these results.

\appendix 
\section{Proof of Lemma \ref{orientation-theorem}} \label{appendix}
\orientationtheorem*

\begin{proof}
If $G$ is $(g,h)$-orientable, then conditions (1) and (2) clearly hold as explained in the introduction. Conversely, suppose that $G$ is not $(g,h)$-orientable. If condition (1) is violated for some $v \in V(G)$, then we are done, so assume that $d_G(v) \le g(v) + h(v)$ for all $v \in V(G)$. Let $D$ be a maximum partial $(g,h)$-orientation of $G$. By assumption, some edge $e \in E(G)$ is not oriented. 

First assume that $e$ is not a loop. Let $u$ and $v$ be the end-vertices of $e$. Since $e$ is not oriented, we have
\begin{align*}
    d_D^-(u) + d_D^+(u) \le d_G(u) - 1 \le g(u) + h(u) - 1,
\end{align*}
which implies we have either $d_D^-(u) < g(u)$ or $d_D^+(u) < h(u)$. Likewise, we have either $d_D^-(v) < g(v)$ or $d_D^+(v) < h(v)$. We cannot have both $d_D^-(u) < g(u)$ and $d_D^+(v) < h(v)$, as otherwise we can orient $e$ from $v$ to $u$ and obtain a larger $(g,h)$-orientation of $G$, contradicting the maximality of $D$. Similarly, we cannot have both $d_D^-(v) < g(v)$ and $d_D^+(u) < h(u)$.

Suppose first that $d_D^-(u) = g(u)$ and $d_D^-(v) = g(v)$, so that $d_D^+(u) < h(u)$ and $d_D^+(v) < h(v)$. Let $S$ be the set of all vertices $w$ of $G$ such that there is a directed path in $D$ from $w$ to either $u$ or $v$. We claim that $d_D^-(w) = g(w)$ for all $w \in S$. By assumption this holds for $w = u$ and $w = v$. Suppose on the contrary that $d_D^-(w) < g(w)$ for some $w \in S$. Let $P$ be a directed path from $w$ to one of $u$ or $v$, say to $v$, and let $D'$ be the partial orientation obtained from $D$ by reversing all the arcs along $P$. Then $D'$ is still a partial $(g,h)$-orientation because $d_{D'}^-(w) = d_D^-(w)+1 \le g(w)$, $d_{D'}^+(v) = d_D^+(v)+1 \le h(v)$, and the indegree and outdegree of the other vertices remain the same. But now $d_{D'}^-(v) = d_D^-(v)-1 = g(v)-1$ and $d_{D'}^+(u) = d_D^+(u) \le h(u)-1$, so we can orient $e$ from $u$ to $v$ and obtain a larger $(g,h)$-orientation of $G$. This contradicts the maximality of $D$ and proves the claim. Now, by definition of $S$, every arc whose head lies in $S$ also has its tail in $S$. Having proven that $d_D^-(w) = g(w)$ for all $w \in S$, by summing the indegrees of vertices in $S$ and including the unoriented edge $e$ we find that
\begin{align*}
    e_G(S) \ge 1 + \sum_{w \in S} d_D^-(w) = 1 + g(S).
\end{align*}
This violates condition (2) as required.

Suppose instead that $d_D^+(u) = h(u)$ and $d_D^+(v) = h(v)$, so that $d_D^-(u) < g(u)$ and $d_D^-(v) < g(v)$. Then let $S$ be the set of all vertices $w$ of $G$ such that there is a directed path in $D$ from either $u$ or $v$ to $w$. A similar path reversal argument shows that $d_D^+(w) = h(w)$ for all $w \in S$, and thus $e_G(S) \ge 1 + \sum_{w \in S} d_D^+(w) = 1 + h(S)$,
again violating condition (2).

Finally, assume that $e$ is a loop at the vertex $v$. Since $e$ is not oriented, we have
\begin{align*}
    d_D^-(v) + d_D^+(v) \le d_G(v) - 2 \le g(v) + h(v) - 2.
\end{align*}
If $d_D^-(v) \le g(v) - 1$ and $d_D^+(v) \le h(v) - 1$, then we can orient $e$ arbitrarily and contradict the maximality of $D$. If $d_D^-(v) = g(v)$ and $d_D^+(v) \le h(v) - 2$, then the same arguments as above let us find a vertex set $S \subseteq V(G)$ violating condition (2). The same applies if $d_D^-(v) \le g(v) - 2$ and $d_D^+(v) = h(v)$. This completes the proof.
\end{proof}

\end{document}